\newcommand{\tr}{^{\sf T}}
\newcommand{\m}[1]{{\bf{#1}}}
\newcommand{\g}[1]{\bm #1}
\newcommand{\C}[1]{{\cal {#1}}}
\renewcommand{\bar}{\overline}
\newtheorem{remark}{Remark}[section]
\title
{Convergence rate for a Gauss collocation method applied to
constrained optimal control
\thanks{
July 10, 2016.
%September 30, 2016.
%Revised October 24, 2017.
Revised December 16, 2017.
The authors gratefully acknowledge support by
the Office of Naval Research under grant
N00014-15-1-2048, by the National Science Foundation under
grant DMS-1522629, and by the U.S. Air Force Research Laboratory
under contract FA8651-08-D-0108/0054.}
}
\author{
William W. Hager\thanks{{\tt hager@ufl.edu},
        http://people.clas.ufl.edu/hager/,
        PO Box 118105,
        Department of Mathematics,
        University of Florida, Gainesville, FL 32611-8105.
        Phone (352) 294-2308. Fax (352) 392-8357.}
\and
Jun Liu\thanks{{\tt juliu@siue.edu},
        http://www.siue.edu/$\sim$juliu/,
        Department of Mathematics and Statistics,
        Southern Illinois University Edwardsville, Edwardsville, IL 62026.
        Phone (618) 650-2220.}
\and
Subhashree Mohapatra\thanks{{\tt subha@ufl.edu},
        Department of Mathematics,
        University of Florida, Gainesville, FL 32611.}                    
\and
Anil V. Rao\thanks{{\tt anilvrao@ufl.edu},
        http://www.mae.ufl.edu/rao,
        Department of Mechanical and Aerospace Engineering,
        P.O. Box 116250, Gainesville, FL 32611-6250.
        Phone (352) 392-0961. Fax (352) 392-7303.}
\and
Xiang-Sheng Wang\thanks{{\tt xswang@louisiana.edu},
        http://www.ucs.louisiana.edu/$\sim$xxw6637/,
        Department of Mathematics,
        University of Louisiana at Lafayette,
        Lafayette, LA 70503.
        Phone (337) 482-5281.}
}
\begin{document}
\maketitle
\begin{abstract}
A local convergence rate is established for
a Gauss orthogonal collocation method applied to optimal control
problems with control constraints.
If the Hamiltonian possesses a strong convexity property, then
the theory yields convergence for problems whose optimal state and
costate possess two square integrable derivatives.
The convergence theory is based on a stability result for the sup-norm
change in the solution of a variational inequality relative to a 2-norm
perturbation,
and on a Sobolev space bound for the error in interpolation at the
Gauss quadrature points and the additional point $-1$.
The tightness of the convergence theory is examined using a numerical example.
\end{abstract}
%------------------------------
\begin{keywords}
Gauss collocation method, convergence rate, optimal control,
orthogonal collocation
\end{keywords}

\begin{AMS}
%49M25 Calculus of variations and optimal control, Discrete approximations
%49M37 Calculus of variations and optimal control,
%          Methods of nonlinear programming type
%65K05 Numerical analysis, Mathematical programming
%90C30 Operations research, mathematical programming, nonlinear programming
49M25, 49M37, 65K05, 90C30
\end{AMS}

\pagestyle{myheadings} \thispagestyle{plain}
\markboth{W. W. HAGER, J. LIU, S. MOHAPATRA, A. V. RAO, and X.-S. WANG}
{GAUSS COLLOCATION IN CONSTRAINED OPTIMAL CONTROL}

%=============================
\section{Introduction}
%=============================
In earlier work \cite{HagerHouRao16a,HagerHouRao15c,HagerHouRao16c},
we analyze the convergence rate for orthogonal collocation methods
applied to unconstrained control problems.
In this analysis, it is assumed that the problem solution is smooth,
in which case the theory implies that the discrete approximations
converge to the solution of the continuous problem at potentially
an exponential rate.
But when control constraints are present, the solution often
possesses limited regularity.
The convergence theory developed in the earlier work for unconstrained problems
required that the optimal state had at least four derivatives, while for
constrained problems, the optimal state may have only two derivatives,
at best \cite{Bonnans10, DontchevHager98a, Hag2, Hermant09}.
The earlier convergence theory was based on a stability analysis
for a linearization of the unconstrained control problem;
the theory showed that the
sup-norm change in the solution was bounded relative to the
sup-norm perturbation in the linear system.
Here we introduce a convex control constraint, in which case
the linearized problem is a variational inequality,
or equivalently a differential inclusion, not a linear system.
We obtain a bound for the sup-norm change in the solution
relative to a 2-norm perturbation in the variational inequality.
By using the 2-norm for the perturbation rather than the sup-norm,
we are able to avoid both Lebesgue constants and the Markov bound
\cite{Markov1916} for the sup-norm of the derivative of a polynomial
relative to the sup-norm of the original polynomial.
Using best approximation results in Sobolev spaces
\cite{BernardiMaday92, Elschner93}, we obtain convergence
when the optimal state and costate have only two square integrable derivatives,
which implies that the theory is applicable to a class of control
constrained problems for which the optimal control is Lipschitz continuous.

The specific collocation scheme analyzed in this paper,
presented in \cite{Benson2,GargHagerRao10a},
is based on collocation at the Gauss quadrature points,
or equivalently, at the roots of a Legendre polynomial.
Other sets of collocation points that have been studied in the literature
include the Lobatto quadrature points
\cite{Elnagar1,Fahroo2, GongRossKangFahroo08},
the Chebyshev quadrature points \cite{Elnagar4,FahrooRoss02},
the Radau quadrature points
\cite{Fahroo3,GargHagerRao11a,LiuHagerRao15,PattersonHagerRao14},
and extrema of Jacobi polynomials \cite{Williams1}.
Kang \cite{kang08,kang10} obtains a convergence rate for the Lobatto
scheme applied to control systems in feedback linearizable normal form
by inserting bounds in the discrete problem for the states, the controls,
and certain Legendre polynomial expansion coefficients.
In our approach, the discretized problem is obtained by simply
collocating at the Gauss quadrature points.

Our approximation to the control problem uses a global polynomial defined
on the problem domain.
Earlier work, including
\cite{DontchevHager93,DontchevHager97,DontchevHagerMalanowski00,
DontchevHagerVeliov00,Hager99c,Kameswaran1,Reddien79},
utilizes a piecewise polynomial approximation, in which case convergence
is achieved by letting the mesh spacing approach zero, while keeping
the polynomial degree fixed.
For an orthogonal collocation scheme based on global polynomials,
convergence is achieved by letting the degree of the polynomials
tend to infinity.
Our results show that even when control constraints are present, and
a solution possesses limited regularity, convergence can still be achieved
with global polynomials.

We consider control problems of the form
\begin{equation}\label{P}
\begin{array}{cll}
\mbox {minimize} &C(\m{x}(1))&\\
\mbox {subject to} &\dot{\m{x}}(t)=
\m{f(x}(t), \m{u}(t)), \quad \m{u}(t) \in \C{U}, \quad t\in\Omega,\\
&\m{x}(-1)=\m{x}_0, \quad
(\m{x}, \m{u}) \in \C{C}^1(\Omega; \;\mathbb{R}^n) \times
\C{C}^0 (\Omega; \;\mathbb{R}^m),
\end{array}
\end{equation}
where $\Omega = [-1, 1]$, the control constraint set
$\C{U} \subset \mathbb{R}^m$
is closed and convex with nonempty interior,
the state ${\m x}(t)\in \mathbb{R}^n$,
$\dot{\m x}$ denotes the derivative of $\m{x}$ with respect to $t$,
${\m x}_0$ is the initial condition which we assume is given,
${\m f}: {\mathbb R}^n \times {\mathbb R}^m\rightarrow {\mathbb R}^n$,
$C: {\mathbb R}^n \rightarrow {\mathbb R}$,
$\C{C}^l(\Omega;\; \mathbb{R}^n)$
denotes the space of $l$ times continuously differentiable functions
mapping $\Omega$ to $\mathbb{R}^n$.
It is assumed that $\m{f}$ and $C$ are at least continuous.

Let $\C{P}_N$ denote the space of polynomials of degree at most $N$,
and let $\C{P}_N^n$ denote the
$n$-fold Cartesian product $\C{P}_N \times \ldots \times \C{P}_N$.
We analyze the discretization of (\ref{P}) given by
\begin{equation}\label{D}
\begin{array}{cl}
\mbox {minimize } &C(\m{x}(1))\\
\mbox {subject to } &\dot{\m{x}}(\tau_i)=
\m{f(x}(\tau_i), \m{u}_i),\quad \m{u}_i \in \C{U}, \quad 1 \le i \le N,\\
&\m{x}(-1)=\m{x}_0, \quad \m{x} \in \C{P}_N^n.
\end{array}
\end{equation}
The polynomials used to approximate the state should satisfy the
dynamics exactly at the collocation points $\tau_i$, $1 \le i \le N$.
The parameter $\m{u}_i$ represents an approximation to the control
at time $\tau_i$.
The dimension of $\C{P}_N$ is $N+1$, while there are
$N+1$ equations in (\ref{D}) corresponding to the
collocated dynamics at $N$ points and the initial condition.
We collocate at the Gauss quadrature points,
which are symmetric about $t = 0$ and which satisfy
\[
-1 < \tau_1 < \tau_2 < \ldots < \tau_N < +1 .
\]
The analysis also makes use of the two noncollocated points
\[
\tau_0 = -1 \quad \mbox{and} \quad \tau_{N+1} = +1.
\]

For $\m{x} \in \C{C}^0(\Omega;\; \mathbb{R}^n)$,
we use the sup-norm $\| \cdot \|_\infty$ given by
\[
\|\m{x}\|_\infty = \sup \{ |\m{x} (t)| : t \in [0, 1] \} ,
\]
where $| \cdot |$ is the Euclidean norm.
Given $\m{y} \in \mathbb{R}^n$, the ball with center $\m{y}$ and radius
$\rho$ is denoted
\[
\C{B}_\rho (\m{y}) := \{ \m{x} \in \mathbb{R}^n :
|\m{x} - \m{y}| \le \rho \} .
\]
The following regularity assumption is assumed to hold throughout the paper.

{\bf Smoothness.}
The problem (\ref{P}) has a local minimizer
$(\m{x}^*, \m{u}^*)$ in
$\C{C}^1 (\Omega;\; \mathbb{R}^n) \times \C{C}^0 (\Omega;\; \mathbb{R}^m)$.
There exists an open set
$\C{O} \subset \mathbb{R}^{m+n}$ and $\rho > 0$ such that
\[
\C{B}_\rho (\m{x}^*(t),\m{u}^*(t)) \subset \C{O} \mbox{ for all }
t \in \Omega.
\]
Moreover, the first two derivatives of $f$ and $C$ are
Lipschitz continuous on the closure of
$\C{O}$ and on $\C{B}_\rho (\m{x}^*(1))$ respectively.

Let $\g{\lambda}^*$ denote the solution of the
linear costate equation
\begin{equation}\label{costate}
\dot{\g{\lambda}}^*(t)=-\nabla_xH({\m x}^*(t), {\m u}^*(t), {\g \lambda}^*(t)),
\quad {\g \lambda}^*(1)=\nabla C({\m x}^*(1)),
\end{equation}
where $H$ is the Hamiltonian defined by
$H({\m x}, {\m u}, {\g \lambda}) ={\g\lambda}\tr {\m f}({\m x}, {\m u})$ and
$\nabla$ denotes gradient.
From the first-order optimality conditions (Pontryagin's minimum principle),
it follows that
\begin{equation} \label{controlmin}
-\nabla_u H({\m x}^*(t), {\m u}^*(t), {\g \lambda}^*(t)) \in
N_\C{U} (\m{u}^*(t)) \quad \mbox{for all } t \in \Omega,
\end{equation}
where $N_\C{U}$ is the normal cone.
For any $\m{u} \in \C{U}$,
\[
N_\C{U}(\m{u}) = \{ \m{w} \in \mathbb{R}^m :
\m{w}\tr(\m{v} - \m{u}) \le 0 \mbox{ for all } \m{v} \in \C{U} \},
\]
while $N_\C{U}(\m{u}) = \emptyset$ if $\m{u} \not\in \C{U}$.

Since the collocation problem (\ref{D}) is finite dimensional,
the first-order optimality conditions, or Karush-Kuhn-Tucker conditions,
hold when a constraint qualification
\cite{NocedalWright2006} is satisfied.
We show in Lemma~\ref{equiv} that the first-order optimality conditions
are equivalent to the existence of $\g{\lambda} \in \C{P}_N^n$ such that
%-----------------------------------------
\begin{eqnarray}
\dot{\g \lambda}(\tau_i) &=&
-\nabla_x H\left({\m x} (\tau_i),{\m u}_i, {\g \lambda} (\tau_i) \right),
\quad 1 \leq i \leq N,  \label{dcostate} \\
{\g \lambda}(1) &=& \nabla C(\m{x}(1)), \label{dterminal}\\
N_\C{U}(\m{u}_i) &\ni&
-\nabla_u H\left({\m x}(\tau_i),{\m u}_i, {\g \lambda} (\tau_i) \right),
\quad 1\leq i\leq N. \label{dcontrolmin}
\end{eqnarray}

The following assumptions are utilized in the convergence analysis.
\smallskip

\begin{itemize}
\item[(A1)]
%Coercivity.
For some $\alpha > 0$,
the smallest eigenvalue of the Hessian matrices $\nabla^2 C(\m{x}^*(1))$
and $ \nabla^2_{(x,u)} H(\m{x}^* (t), \m{u}^* (t), \g{\lambda}^* (t) )$
are greater than $\alpha$, uniformly for $t \in [0, 1]$.
\item[(A2)]
For some $\beta < 1/2$,
the Jacobian of the dynamics satisfies
\[
\|\nabla_x \m{f} (\m{x}^*(t), \m{u}^*(t))\|_\infty \le \beta
\quad \mbox{and} \quad
\|\nabla_x \m{f} (\m{x}^*(t), \m{u}^*(t))\tr\|_\infty \le \beta
\]
for all $t \in \Omega$ where $\| \cdot \|_\infty$ is the matrix
sup-norm (largest absolute row sum), and the Jacobian
$\nabla_x \m{f}$ is an $n$ by $n$  matrix whose $i$-th row is
$(\nabla_x f_i)\tr$.
\end{itemize}
\smallskip

The condition (A2) ensures (see Lemma~\ref{feasible})
that in the discrete linearized problem, it is possible to solve for the
discrete state in terms of the discrete control.
As shown in \cite{HagerHouRao16a}, this property holds
in an $hp$-collocation framework when the domain $\Omega$
is partitioned into $K$ mesh intervals with $K$
large enough that
\[
\|\nabla_x \m{f} (\m{x}^*(t), \m{u}^*(t))\|_\infty/K \le \beta
\quad \mbox{and} \quad
\|\nabla_x \m{f} (\m{x}^*(t), \m{u}^*(t))\tr\|_\infty/K \le \beta
\]
for all $t \in \Omega$.

The coercivity assumption (A1) is not only a sufficient condition for
the local optimality of a feasible point $(\m{x}^*, \m{u}^*)$ of (\ref{P}),
but it yields the stability of the discrete linearized problem
(see Lemma~\ref{inf-bounds}).
One would hope that (A1) could be weakened to only require coercivity
relative to a subspace associated with the linearized dynamics similar to
what is done in \cite{DontchevHager93}.
To formulate this weakened condition, we introduce the following 6 matrices:
\[
\begin{array}{ll}
{\bf A}(t)=\nabla_x{\bf f}({\bf x}^*(t),{\bf u}^*(t)),
&{\bf B}(t)=\nabla_u{\bf f}({\bf x}^*(t),{\bf u}^*(t)),\\[.1in]
{\bf Q}(t)=\nabla_{xx}H\left({\bf x}^*(t),{\bf u}^*(t),
{\g \lambda}^*(\tau_i)\right),
&{\bf S}(t)=\nabla_{ux}H\left({\bf x}^*(t),{\bf u}^*(t),
{\g \lambda}^*(\tau_i)\right),\\[.1in]
{\bf R}(t)=\nabla_{uu}H\left({\bf x}^*(t),{\bf u}^*(t),
{\g \lambda}^*(\tau_i)\right), &{\bf T}=\nabla^2C({\bf x}^*(1)).
\end{array}
\]
With this notation and with
$\langle \cdot , \cdot \rangle$ denoting the $L^2$ inner product,
the weaker version of (A1) is that
\[
{\bf x}(1)\tr{\bf T}{\bf x}(1) +
\langle \m{x}, \m{Qx}\rangle +
\langle \m{u}, \m{Ru}\rangle +
2\langle \m{x}, \m{Su}\rangle \ge
\alpha
\langle \m{u}, \m{u}\rangle,
\]
whenever $(\m{x}, \m{u})$ satisfies
$\dot{\m{x}} = \m{Ax} + \m{Bu}$ with
$\m{x}(-1) = \m{0}$ and $\m{u} = \m{v} - \m{w}$
for some $\m{v}$ and $\m{w} \in L^2$ satisfying
$\m{v}(t)$ and $\m{w}(t) \in \C{U}$ for almost every $t \in [-1, 1]$.
For the Euler integration scheme, we show in \cite[Lem.~11]{DontchevHager93}
that this weaker condition implies an analogous coercivity property
for the discrete problem.
The extension of this result from the Euler scheme to orthogonal collocation
schemes remains an open problem.

%In addition to the two assumptions, the analysis utilizes two properties
%of the Gauss collocation scheme.
Let $\m{D}$ be the $N$ by $N+1$ matrix defined by
\begin{equation}\label{Ddef}
D_{ij} = \dot{L}_j (\tau_i), \;
\mbox{where }
L_j (\tau) := \prod^{N}_{\substack{l=0\\l\neq j}}
\frac{\tau-\tau_l}{\tau_j-\tau_l}, \;
1 \le i \le N \mbox{ and } 0 \le j \le N.
\end{equation}
The matrix $\m{D}$ is a differentiation matrix in the sense that
$(\m{Dp})_i = \dot{p} (\tau_i)$, $1 \le i \le N$,
whenever $p \in \C{P}_N$ is the polynomial that satisfies
$p(\tau_j) = p_j$ for $0 \le j \le N$.
The submatrix $\m{D}_{1:N}$, consisting of the trailing $N$ columns of $\m{D}$,
has the following properties which are utilized in the analysis:
\smallskip
\begin{itemize}
\item [(P1)]
$\m{D}_{1:N}$ is invertible and
$\| \m{D}_{1:N}^{-1}\|_\infty \le 2$.
\item [(P2)]
If $\m{W}$ is the diagonal matrix containing the Gauss
quadrature weights $\omega_i$, $1 \le i \le N$, on the diagonal,
then the rows of the matrix $[\m{W}^{1/2} \m{D}_{1:N}]^{-1}$
have Euclidean norm bounded by $\sqrt{2}$.
\end{itemize}
\smallskip

The invertibility of $\m{D}_{1:N}$ is proved in \cite[Prop. 1]{GargHagerRao10a}.
The bound for the inverse appearing in (P1) is established in
Appendix~1.
(P2) has been checked numerically for $N$ up to 300 in \cite{HagerHouRao16c}.
Some intuition concerning the general validity of (P2) is as follows:
It is observed numerically that the last row of
the matrix $[\m{W}^{1/2} \m{D}_{1:N}]^{-1}$ has the largest Euclidean norm
among all the rows.
Based on the formula for $\m{D}_{1:N}^{-1}$ given in
\cite[Sect. 4.1.2]{GargHagerRao10a},
the $i$-th element in the last row approaches $\omega_i$ as $N$ tends
to infinity.
Hence, the $i$-th element in the last row of $[\m{W}^{1/2} \m{D}_{1:N}]^{-1}$
approaches $\sqrt{\omega_i}$ as $N$ tends to infinity.
Since the quadrature weights sum to 2, the Euclidean norm of the last row
of $[\m{W}^{1/2} \m{D}_{1:N}]^{-1}$ should be close to $\sqrt{2}$.
Despite the strong numerical evidence for (P2),
a proof of (P2) for general $N$ is still missing.

The properties (P1) and (P2) are stated separately since they are used
in different ways in the analysis.
However, (P2) implies (P1) by the Schwarz inequality.
That is, if $\m{r}$ is a row from $\m{D}_{1:N}^{-1}$, then we have
\[
\sum_{i=1}^N |r_i| =
\sum_{i=1}^N \sqrt{\omega_i} \left( |r_i|/\sqrt{\omega_i}\right) \le
\left( \sum_{i=1}^N \omega_i \right)^{1/2}
\left( \sum_{i=1}^N r_i^2/\omega_i \right)^{1/2} \le 2
\]
since the quadrature weights sum to 2 and when (P2) holds,
the Euclidean norm of a row from $[\m{W}^{1/2} \m{D}_{1:N}]^{-1}$ is at most
$\sqrt{2}$.

If $\m{x}^N \in \C{P}_N^n$ is a solution of (\ref{D}) associated
with the discrete controls $\m{u}_i$, $1 \le i \le N$, and
if $\g{\lambda}^N \in \C{P}_N^n$ satisfies
(\ref{dcostate})--(\ref{dcontrolmin}), then we define
\[
\begin{array}{llllllll}
\m{X}^N &= [ &\m{x}^N(-1), & \m{x}^N(\tau_1), & \ldots,
& \m{x}^N(\tau_N), & \m{x}^N(+1) &], \\
\m{X}^* &= [ &\m{x}^*(-1), & \m{x}^*(\tau_1), & \ldots,
& \m{x}^*(\tau_N), & \m{x}^*(+1) &], \\
\m{U}^N &= [ && \m{u}_1, & \ldots, & \m{u}_N\; & &], \\
\m{U}^* &= [ && \m{u}^*(\tau_1), & \ldots, & \m{u}^*(\tau_N)& & ], \\
\g{\Lambda}^N &= [ &\g{\lambda}^N(-1), & \g{\lambda}^N(\tau_1),
& \ldots, & \g{\lambda}^N(\tau_N), & \g{\lambda}^N(+1) &], \\
\g{\Lambda}^* &= [ &\g{\lambda}^*(-1), & \g{\lambda}^*(\tau_1),
& \ldots, & \g{\lambda}^*(\tau_N), & \g{\lambda}^*(+1) &].
\end{array}
\]
The following convergence result relative to the vector $\infty$-norm
(largest absolute element) is established.
Here $\C{H}^p(\Omega;\; \mathbb{R}^n)$ denotes the
Sobolev space of functions with square integrable
derivatives through order $p$ and norm denoted
$\| \cdot \|_{\C{H}^p(\Omega;\; \mathbb{R}^n)}$.
\smallskip

\begin{theorem}\label{maintheorem}
Suppose $(\m{x}^*, \m{u}^*)$ is a local minimizer for the continuous problem
$(\ref{P})$ with $(\m{x}^*, \g{\lambda}^*) \in \C{H}^\eta(\Omega; \mathbb{R}^n)$
for some $\eta \ge 2$.
If both {\rm (A1)--(A2)} and {\rm (P1)--(P2)} hold,
then for $N$ sufficiently large,
the discrete problem $(\ref{D})$ has a local minimizer
$\m{x}^N \in \C{P}_N^n$ and $\m{u}\in \mathbb{R}^{mN}$, and an associated
multiplier $\g{\lambda}^N \in \C{P}_N^n$ satisfying
$(\ref{dcostate})$--$(\ref{dcontrolmin})$; moreover,
there exists a constant $c$ independent of $N$ and $\eta$ such that
\begin{eqnarray}
&\max \left\{ \|{\bf X}^N-{\bf X}^*\|_\infty ,
\|{\bf U}^N-{\bf U}^*\|_\infty,
\|{\g \Lambda}^N-{\g \Lambda}^*\|_\infty \right\}& \nonumber \\
&\leq \left( \displaystyle{\frac{c}{N}} \right)^{p-3/2} \left(
\|\m{x}^*\|_{\C{H}^p(\Omega;\; \mathbb{R}^n)}
+ \|\g{\lambda}^*\|_{\C{H}^p(\Omega;\; \mathbb{R}^n)}
\right), \quad p := \min\{\eta, N+1\}.&
\label{maineq}
\end{eqnarray}
\end{theorem}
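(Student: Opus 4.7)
The plan is to recast the discrete first-order conditions (\ref{dcostate})--(\ref{dcontrolmin}) together with the collocated dynamics as a nonlinear variational inequality on $\C{P}_N^n \times \C{U}^N \times \C{P}_N^n$, to show that the interpolants of the continuous optimum $(\m{x}^*,\m{u}^*,\g{\lambda}^*)$ satisfy this VI up to a small residual, and then to invoke the 2-norm/sup-norm stability estimate for variational inequalities (the central tool promised in the abstract) together with a standard contraction argument to produce the discrete minimizer and the error bound.

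Step one: introduce $\tilde{\m{x}}\in\C{P}_N^n$ that interpolates $\m{x}^*$ at $\tau_0,\ldots,\tau_N$, $\tilde{\g{\lambda}}\in\C{P}_N^n$ that interpolates $\g{\lambda}^*$ at $\tau_1,\ldots,\tau_{N+1}$, and $\tilde{\m{u}}_i=\m{u}^*(\tau_i)$. Plugging this triple into the collocated dynamics, (\ref{dcostate}), (\ref{dterminal}), and the VI (\ref{dcontrolmin}) leaves residuals whose dominant contributions are $\dot{\tilde{\m{x}}}(\tau_i)-\dot{\m{x}}^*(\tau_i)$ and $\dot{\tilde{\g{\lambda}}}(\tau_i)-\dot{\g{\lambda}}^*(\tau_i)$; the Lipschitz remainders arising from $\m{f}$, $\nabla H$, and $\nabla C$ all vanish at the interpolation nodes. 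By the Sobolev best-approximation theory of Bernardi--Maday \cite{BernardiMaday92} and Elschner \cite{Elschner93} for Gauss interpolation augmented by a single endpoint, the quadrature-weighted 2-norm of the residual is bounded by
\[
(c/N)^{p-3/2}\bigl(\|\m{x}^*\|_{\C{H}^p(\Omega;\mathbb{R}^n)}+\|\g{\lambda}^*\|_{\C{H}^p(\Omega;\mathbb{R}^n)}\bigr),
\]
with $p=\min\{\eta,N+1\}$; the exponent reflects the loss of one derivative in passing to the collocated ODE residual together with the structure of the Gauss-plus-endpoint interpolant.

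Step two: apply the variational-inequality stability lemma, which asserts that the vector $\infty$-norm perturbation in $(\m{X},\m{U},\g{\Lambda})$ caused by a right-hand-side perturbation of the discrete KKT system is bounded by an $N$-independent constant times the 2-norm of that perturbation. Its hypotheses split into two parts. First, the linearized discrete state and costate dynamics must be solvable stably for state/costate in terms of the control: this follows from (A2) combined with (P1), via the fixed-point argument of Lemma~\ref{feasible}. Second, the Hessian quadratic form on the reduced discrete problem must be uniformly coercive in the Gauss-quadrature inner product, which follows from (A1) combined with (P2), the latter being precisely what allows the discrete quadrature sum to be replaced by the continuous $L^2$ inner product without $N$-dependent loss. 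Combining the output of this stability lemma with the residual estimate from step one yields the sup-norm bound (\ref{maineq}).

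Existence of a discrete local minimizer near the interpolant then follows from a Brouwer/Newton contraction argument applied to the nonlinear VI: the stability lemma supplies an $N$-uniform Lipschitz constant for the inverse of the linearization, while the nonlinear remainder is quadratic in the deviation by the Lipschitz continuity of $\nabla^2\m{f}$ and $\nabla^2 C$ (the smoothness hypothesis), so the associated map on a ball of radius $O(N^{-(p-3/2)})$ about the interpolants is a contraction for all sufficiently large $N$. The main obstacle is the stability lemma itself: one must prove it with an $N$-independent constant in this mixed-norm setting (sup-norm output, 2-norm input). The earlier unconstrained theory closed the analogous loop using a Markov-type inequality for polynomial derivatives, which is too expensive once the solution has only $\C{H}^2$ regularity; the contribution of the present paper is to replace Markov by the combination of (P2) and the Sobolev-space interpolation bounds above, which together keep all constants $N$-uniform throughout the argument.
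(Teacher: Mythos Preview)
Your proposal is correct and follows essentially the same route as the paper: cast the discrete KKT system as a variational inclusion, estimate the residual at the continuous optimum via the Sobolev interpolation bound (Lemma~\ref{interp} and Lemma~\ref{residual_lemma}), prove sup-norm/2-norm Lipschitz stability of the linearized inclusion (Lemmas~\ref{feasible}, \ref{cofeasible}, \ref{inf-bounds}), and close with the abstract contraction result (Proposition~\ref{abstractProp}).

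One point needs correcting: you misidentify where (P2) enters. Coercivity of the discrete quadratic form in the $\omega$-inner product follows from (A1) alone---see (\ref{lower}); (P2) plays no role there, and nothing about (P2) lets you ``replace the discrete quadrature sum by the continuous $L^2$ inner product.'' Rather, (P2) is used inside Lemmas~\ref{feasible} and~\ref{cofeasible} to obtain $\|\bar{\m{D}}^{-1}\m{p}\|_\infty \le \sqrt{2}\,\|\m{p}\|_\omega$ (equation~(\ref{h101})), which is exactly the step converting a 2-norm right-hand-side perturbation into a sup-norm state/costate bound. This is the mechanism that replaces the Markov inequality of the earlier unconstrained analysis. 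A second, minor slip: your claim that ``the Lipschitz remainders arising from $\nabla C$ vanish at the interpolation nodes'' is false, since $\tau_{N+1}=1$ is not one of your state interpolation nodes; the paper sidesteps this by anchoring at the pointwise values $(\m{X}^*,\m{U}^*,\g{\Lambda}^*)$ rather than at the polynomial interpolants, which makes $\C{T}_5^*=\m{0}$ exactly and pushes the residual into $\C{T}_2^*$ instead.
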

\smallskip

This result was established in \cite{HagerHouRao16c} for unconstrained
control problem, but with the exponent 3/2 replaced by 3 and with $\eta \ge 4$.
Hence, the analysis is extended to control constrained problems
and the exponent of $N$ in the convergence estimate is improved by 1.5.
Since typical control constrained problems have regularity at most
$\eta = 2$ when (A1) holds, there is no guarantee of convergence with
the previous estimate.

The paper is organized as follows.
In Section~\ref{abstract} the discrete optimization
problem (\ref{D}) is reformulated as a differential inclusion obtained
from the first-order optimality conditions,
and a general approach to convergence analysis is presented.
We also establish the connection between the Karush-Kuhn-Tucker conditions
and the polynomial conditions (\ref{dcostate})--(\ref{dcontrolmin}).
In Section~\ref{sect_interp} we use results from \cite{BernardiMaday92} to
bound the derivative of the interpolation error in $\C{L}^2$.
Section~\ref{residual} estimates how closely the
solution to the continuous problem satisfies the first-order optimality
conditions for the discrete problem, while
Section~\ref{inverse} establishes the invertibility of the linearized
dynamics for the discrete problem.
Section~\ref{Lip} proves a Lipschitz property for the linearized
optimality conditions, which yields a proof of Theorem~\ref{maintheorem}.
A numerical example given in Section~\ref{numerical} indicates the
potential for further improvements to the convergence rate exponent.
Section~\ref{appendix2} contains a result of Yvon Maday concerning the
error in best $\C{H}^1$ approximation relative to an
$\C{L}^2$ norm with a singular weight function.

{\bf Notation.}
We let $\C{P}_N$ denote the space of polynomials of degree at most $N$,
while $\C{P}_N^0$ is the subspace consisting of polynomials in
$\C{P}_N$ that vanish at $t = -1$ and $t = 1$.
The Gauss collocation points
$\tau_i$, $1 \le i \le N$, are the roots of the
Legendre polynomial $P_N$ of degree $N$.
The associated Gauss quadrature weights
$\omega_i$, $1 \le i \le N$, are given by
\begin{equation}\label{GaussQuad}
\omega_i = \frac{2}{(1-\tau_i^2) P_N'(\tau_i)^2} .
\end{equation}
For any $p \in \C{P}_{2N-1}$, we have \cite[Thm. 3.6.24]{StoerBulirsch02}
\begin{equation}\label{exact}
\int_\Omega p(t) \, dt = \sum_{i=1}^N \omega_i p(\tau_i) .
\end{equation}
Derivatives with respect to $t$ are denoted with either a dot above
the function as in $\dot{\m{x}}$, which is common in the optimal
control literature, or with an accent as in $p'$,
which is common in the numerical analysis literature.
The meaning of the norm $\| \cdot \|_\infty$ is based on context.
If $\m{x} \in \C{C}^0 (\mathbb{R}^n)$, then
$\|\m{x}\|_\infty$ denotes the maximum of $|\m{x}(t)|$ over
$t \in [-1, 1]$, where $| \cdot|$ is the Euclidean norm.
For a vector $\m{v} \in \mathbb{R}^m$,
$\|\m{v}\|_\infty$ is the maximum of $|v_i|$ over $1 \le i \le m$.
If $\m{A} \in \mathbb{R}^{m \times n}$, then $\|\m{A}\|_\infty$
is the largest absolute row sum
(the matrix norm induced by the vector sup-norm).
%We let $|\m{A}|$ denote the matrix norm induced by the Euclidean vector norm.
We often partition a vector $\m{p} \in \mathbb{R}^{nN}$ into subvectors
$\m{p}_i \in \mathbb{R}^n$, $1 \le i \le N$.
Similarly, if $\m{p} \in \mathbb{R}^{mN}$, then $\m{p}_i \in \mathbb{R}^m$.
The dimension of the identity matrix $\m{I}$ is often clear from context;
when necessary, the dimension of $\m{I}$ is specified by a subscript.
For example, $\m{I}_n$ is the $n$ by $n$ identity matrix.
The gradient is denoted $\nabla$, while $\nabla^2$ denotes the Hessian;
subscripts indicate the differentiation variables.
Throughout the paper, $c$ is a generic constant which is independent of
the polynomial degree $N$ and the smoothness $\eta$, and
which may have different values in different equations.
The vector $\m{1}$ has all entries equal to one, while
the vector $\m{0}$ has all entries equal to zero;
again, their dimension should be clear from context.
If $\m{D}$ is the differentiation matrix introduced in (\ref{Ddef}), then
$\m{D}_j$ is the $j$-th column of $\m{D}$ and
$\m{D}_{i:j}$ is the submatrix formed by columns $i$ through $j$.
We let $\otimes$ denote the Kronecker product.
If $\m{U} \in \mathbb{R}^{m \times n}$ and $\m{V} \in \mathbb{R}^{p \times q}$,
then $\m{U} \otimes \m{V}$ is the $mp$ by $nq$ matrix composed of
$p \times q$ blocks; the $(i,j)$ block is $u_{ij} \m{V}$.
We let $\C{L}^2 (\Omega)$ denote the usual space of functions square integrable
on $\Omega$, while $\C{H}^p(\Omega)$ is the Sobolev space consisting
of functions with square integrable derivatives through order $p$.
The norm in $\C{H}^p(\Omega)$ is denoted $\| \cdot \|_{\C{H}^p(\Omega)}$.
The seminorm in $\C{H}^1(\Omega)$ corresponding to the $\C{L}^2(\Omega)$
norm of the derivative is denoted $| \cdot |_{\C{H}^1(\Omega)}$.
The subspace of $\C{H}^1(\Omega)$ corresponding to functions that vanish
at $t = -1$ and $t = 1$ is denoted $\C{H}_0^1(\Omega)$.
We let $\C{H}^p(\Omega ;\; \mathbb{R}^n)$ denote the $n$-fold Cartesian
product $\C{H}^p(\Omega) \times \ldots \times \C{H}^p(\Omega)$.

%--------------------------------------------------------------------------
\section{Abstract Setting}
\label{abstract}
%--------------------------------------------------------------------------
In the introduction, we formulated the discrete optimization problem (\ref{D})
and the necessary conditions (\ref{dcostate})--(\ref{dcontrolmin})
in polynomial spaces.
However, to prove Theorem~\ref{maintheorem}, we reformulate the
first-order optimality conditions in Cartesian space.
Given a feasible point $\m{x}\in \C{P}_N^n$ and $\m{u}\in \mathbb{R}^{mN}$
for the discrete problem (\ref{D}),
define $\m{X}_{j} = \m{x}(\tau_j)$, $0 \le j \le N+1$,
and $\m{U}_{i} = \m{u}_{i}$, $1 \le i \le N$.
As noted earlier, $\m{D}$ is a differentiation matrix in the sense that
\[
\sum_{j=0}^N D_{ij} \m{X}_{j} = \dot{\m{x}} (\tau_i), \quad
1 \le i \le N.
\]
Since $\dot{\m{x}} \in \C{P}_{N-1}^n$, it follows from the
exactness result (\ref{exact}) for Gaussian quadrature that
when $\m{x}$ satisfies the dynamics of (\ref{D}), we have
\[
\m{X}_{N+1} = \m{x}(1) = \m{x}(-1) + \int_{\Omega} \dot{\m{x}}(t)\, dt =
\m{X}_0 + \sum_{j=1}^N\omega_j{\m f}({\m X}_j,{\m U}_j).
\]
Hence, the discrete problem (\ref{D}) can be reformulated as the
nonlinear programming problem
\begin{eqnarray}
&\mbox {minimize} &\quad C(\m{X}_{N+1}) \nonumber\\
&\mbox {subject to} & \quad \sum_{j=0}^N D_{ij} \m{X}_{j} =
\m{f}(\m{X}_{i}, \m{U}_{i}), \quad \m{U}_i \in \C{U}, \quad 1 \le i \le N,
\label{nlp}\\[-.15in]
&&\quad \m{X}_0=\m{x}_0, \quad
{\m X}_{N+1}={\m X}_0+\sum_{j=1}^N\omega_j{\m f}({\m X}_j,{\m U}_j).
\nonumber
\end{eqnarray}
To prove Theorem~\ref{maintheorem}, we analyze the existence and
stability of solutions to the first-order optimality conditions associated
with the nonlinear programming problem.

We introduce multipliers $\g{\mu}_j \in \mathbb{R}^n$,
$0 \le j \le N+1$ corresponding to each of the constraints in the
nonlinear program.
The first-order optimality conditions correspond to
stationary points of the Lagrangian
\begin{eqnarray*}
&
%\mathcal{L}({\g \mu}, {\bf X},{\bf U}) =
C(\mathbf{X}_{N+1})&
+\sum_{i=1}^N\left\langle\g{\mu}_{i},
{\bf f}({\bf X}_{i},{\bf U}_{i})
-\sum_{j=0}^{N}{D}_{ij}{\bf X}_{j} \right\rangle +
\left\langle \g{\mu}_0, \m{x}_0 - \m{X}_0 \right\rangle \\
&& + \left\langle \g{\mu}_{N+1} , \m{X}_0 - \m{X}_{N+1}
+ \sum_{i = 1}^N \omega_i \m{f}(\m{X}_i, \m{U}_i) \right\rangle.
\end{eqnarray*}
The stationarity conditions for the Lagrangian appear below.
\begin{eqnarray}
\m{X}_{0}\;&\Rightarrow& \g{\mu}_{N+1} = \g{\mu}_0 +
\sum_{i=1}^N{D}_{i0}{\g \mu}_{i},
\label{NC0} \\
\m{X}_{j}\;&\Rightarrow&
\sum_{i=1}^N {D}_{ij}\g{\mu}_{i} = \nabla_x H
({\bf X}_{j},{\bf U}_{j},\g{\mu}_{j} + \omega_j\g{\mu}_{N+1}),
\quad 1 \le j \le N, \label{NC1} \\
\m{X}_{N+1}&\Rightarrow&
\g{\mu}_{N+1} = \nabla C(\m{X}_{N+1}),
\label{NC2} \\[.1in]
\m{U}_{i}\;&\Rightarrow&
-\nabla_u H\left({\bf X}_{i},{\bf U}_{i},{\g \mu}_{i}
+ \omega_i \g{\mu}_{N+1}\right)
\in N_\C{U}(\m{U}_i), \quad 1 \le i \le N.
\label{NC3}
\end{eqnarray}
Since there are no state constraints, the conditions
(\ref{NC0})--(\ref{NC2}) are obtained by setting to zero the
derivative of the Lagrangian with respect to the indicated variables.
The condition (\ref{NC3}) corresponds to stationarity of the Lagrangian
respect to the control.
The relation between multipliers satisfying
(\ref{NC0})--(\ref{NC3}) and $\g{\lambda} \in \C{P}_N^n$
satisfying (\ref{dcostate})--(\ref{dcontrolmin}) is as follows.
\smallskip

\begin{proposition}\label{equiv}
The multipliers $\g{\mu} \in \mathbb{R}^{n(N+2)}$
satisfy $(\ref{NC0})$--$(\ref{NC3})$ if and only if the polynomial
$\g{\lambda} \in \C{P}_{N}^n$ satisfying the $N+1$ interpolation conditions
$\g{\lambda}(1) = \g{\mu}_{N+1}$ and
$\g{\lambda}(\tau_i) = \g{\mu}_{N+1} + \g{\mu}_{i}/\omega_i$,
$1 \le i \le N$,
is a solution of $(\ref{dcostate})$--$(\ref{dcontrolmin})$ and
$\g{\lambda}(-1) = \g{\mu}_0$.
\end{proposition}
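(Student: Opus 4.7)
My plan is to transform the stationarity conditions (\ref{NC0})--(\ref{NC3}) one at a time into the polynomial conditions (\ref{dcostate})--(\ref{dterminal}) and $\g{\lambda}(-1) = \g{\mu}_0$, using that $\g{\lambda}\in\C{P}_N^n$ is uniquely determined by the $N+1$ interpolation conditions at $\tau_1,\ldots,\tau_N,+1$. The first observation is that the interpolation definition together with $\g{\mu}_{N+1} = \g{\lambda}(1)$ gives
\[
\g{\mu}_j + \omega_j \g{\mu}_{N+1} = \omega_j \g{\lambda}(\tau_j), \qquad 1\le j \le N.
\]
Since $H(\m{x},\m{u},\g{\lambda}) = \g{\lambda}\tr \m{f}(\m{x},\m{u})$ is linear in $\g{\lambda}$, the gradients $\nabla_x H$ and $\nabla_u H$ are linear in the costate argument, so this substitution turns (\ref{NC2}) directly into (\ref{dterminal}); and it turns (\ref{NC3}) into $-\omega_i \nabla_u H(\m{X}_i,\m{U}_i,\g{\lambda}(\tau_i)) \in N_{\C{U}}(\m{U}_i)$, which is equivalent to (\ref{dcontrolmin}) because $\omega_i > 0$ and $N_{\C{U}}(\m{U}_i)$ is a cone.

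The real work is relating (\ref{NC1}) and (\ref{NC0}) to $\dot{\g{\lambda}}(\tau_i)$ and $\g{\lambda}(-1)$, because the differentiation matrix $\m{D}$ is associated with interpolation at $\tau_0,\ldots,\tau_N$, while $\g{\lambda}$ is interpolated at $\tau_1,\ldots,\tau_N,+1$. To bridge the two, I would use a summation-by-parts identity: for each $i \in \{0,1,\ldots,N\}$ and each coordinate $k$, plug the test polynomial $\m{x} = L_i e_k$ (where $L_i$ is the Lagrange basis polynomial for node $\tau_i$ and $e_k$ is the $k$-th standard basis vector) into
\[
\int_{-1}^1 \g{\lambda}(t)\tr \dot{\m{x}}(t)\,dt
= \g{\lambda}(1)\tr \m{x}(1) - \g{\lambda}(-1)\tr \m{x}(-1)
- \int_{-1}^1 \dot{\g{\lambda}}(t)\tr \m{x}(t)\,dt.
\]
Both integrands are polynomials of degree at most $2N-1$, so Gauss quadrature is exact on each side. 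Using $(\m{Dp})_i = \dot{p}(\tau_i)$, the left side becomes $\sum_{l=1}^N \omega_l D_{li}\g{\lambda}(\tau_l)_k$, and the integral on the right becomes $\omega_i \dot{\g{\lambda}}(\tau_i)_k$ when $1 \le i \le N$ and $0$ when $i=0$. Combining with the identity $L_i(1) - L_i(-1) = \sum_{l=1}^N \omega_l D_{li}$ (again exact Gauss quadrature of $\dot{L}_i$) to simplify the boundary terms yields, after rewriting in terms of $\g{\mu}_l = \omega_l(\g{\lambda}(\tau_l) - \g{\lambda}(1))$,
\[
\sum_{l=1}^N D_{li}\g{\mu}_l = -\omega_i \dot{\g{\lambda}}(\tau_i) \ (1\le i\le N), \qquad
\sum_{l=1}^N D_{l0}\g{\mu}_l = \g{\mu}_{N+1} - \g{\lambda}(-1).
\]

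Once these two identities are in hand, the conclusion is mechanical. Combining the first with (\ref{NC1}) and the already-established formula $\g{\mu}_i + \omega_i\g{\mu}_{N+1} = \omega_i \g{\lambda}(\tau_i)$, together with linearity of $\nabla_x H$ in $\g{\lambda}$, gives $\dot{\g{\lambda}}(\tau_i) = -\nabla_x H(\m{X}_i,\m{U}_i,\g{\lambda}(\tau_i))$, which is (\ref{dcostate}); the second identity shows that (\ref{NC0}) is equivalent to $\g{\lambda}(-1) = \g{\mu}_0$. All steps are reversible, yielding the if-and-only-if. The main obstacle is the derivation of the two duality identities; it is conceptually routine but needs careful bookkeeping of boundary terms to verify that the degrees are low enough for Gauss quadrature to be exact and that $L_i(\pm 1)$ combines cleanly with the quadrature-weighted sums.
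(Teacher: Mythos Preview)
Your argument is correct. The key identities $\sum_{l=1}^N D_{li}\g{\mu}_l = -\omega_i \dot{\g{\lambda}}(\tau_i)$ and $\sum_{l=1}^N D_{l0}\g{\mu}_l = \g{\mu}_{N+1} - \g{\lambda}(-1)$ do follow from integrating by parts against the Lagrange basis $L_i$ and evaluating both integrals by exact Gauss quadrature, exactly as you outline; the degree count ($\g{\lambda}\dot{L}_i,\,\dot{\g{\lambda}}L_i \in \C{P}_{2N-1}$) checks out, and the boundary bookkeeping using $L_i(1)-L_i(-1)=\sum_l \omega_l D_{li}$ is clean.

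Your route differs from the paper's. The paper works algebraically: it introduces a second differentiation matrix $\m{D}^\dagger$ associated with the node set $\tau_1,\ldots,\tau_N,+1$, invokes the identity $D_{ij} = -(\omega_j/\omega_i)D_{ji}^\dagger$ together with $D_{i,N+1}^\dagger = -\sum_j D_{ij}^\dagger$, and then cites \cite[Thm.~1]{GargHagerRao10a} for the fact that $\m{D}^\dagger$ really is the differentiation matrix for that shifted node set, i.e.\ $\sum_{j=1}^{N+1} D_{ij}^\dagger \g{\Lambda}_j = \dot{\g{\lambda}}(\tau_i)$. Your integration-by-parts argument is more self-contained---it derives the needed duality identities from scratch without appealing to $\m{D}^\dagger$ or the external reference---and in effect \emph{proves} the $\m{D}/\m{D}^\dagger$ relationship along the way. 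The paper's approach, on the other hand, makes $\m{D}^\dagger$ explicit, which pays off later in the analysis (Lemma~\ref{cofeasible} and the operator $\C{T}_4$ are phrased in terms of $\m{D}^\dagger$), so its formulation dovetails with the rest of the paper.
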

\smallskip

\begin{proof}
We start with multipliers $\g{\mu}$
satisfying (\ref{NC0})--(\ref{NC3}) and show that
$\g{\lambda} \in \C{P}_{N}^n$ satisfying the interpolation conditions
$\g{\lambda}(1) = \g{\mu}_{N+1}$ and
$\g{\lambda}(\tau_i) = \g{\mu}_{N+1} + \g{\mu}_{i}/\omega_i$,
$1 \le i \le N$, is a solution of $(\ref{dcostate})$--$(\ref{dcontrolmin})$
with $\g{\lambda}(-1) = \g{\mu}_0$.
The converse follows by reversing all the steps in the derivation.
Define $\g{\Lambda}_i = \g{\mu}_{N+1} + \g{\mu}_{i}/\omega_i$ for
$1 \le i \le N$, $\g{\Lambda}_{N+1} = \g{\mu}_{N+1}$, and
$\g{\Lambda}_0 = \g{\mu}_0$.
Hence, we have $\g{\mu}_i = \omega_i (\g{\Lambda}_i - \g{\Lambda}_{N+1})$
for $1 \le i \le N$.
In (\ref{NC3}) we divide by $\omega_i$ and substitute
$\g{\Lambda}_i = \g{\mu}_{N+1} + \g{\mu}_{i}/\omega_i$.
In (\ref{NC1}) we divide by $\omega_j$, and substitute
$\g{\Lambda}_j = \g{\mu}_{N+1} + \g{\mu}_{j}/\omega_j$ and
\begin{equation}\label{Ddagger}
{D}_{ij}=- \left(\frac{\omega_j}{\omega_i}\right) {D}_{ji}^\dag, \quad
{D}_{i,N+1}^\dag=-\sum_{j=1}^N{D}_{ij}^\dag,
\quad 1\leq i \leq N.
\end{equation}
With these modifications, (\ref{NC1})--(\ref{NC3}) become
\begin{eqnarray}
\sum_{j=1}^{N+1}{D}_{ij}^\dag{\g \Lambda}_j &=&
-\nabla_x H\left({\m X}_i,{\m U}_i, {\g \Lambda}_i\right), \label{Dadjoint} \\
{\g \Lambda}_{N+1}&=&\nabla C(\m{X}_{N+1}), \label{Dterminal} \\
N_\C{U}(\m{U}_i) &\ni&
-\nabla_u H\left({\m X}_i,{\m U}_i, {\g \Lambda}_i\right) ,
\label{Dcontrolmin}
\end{eqnarray}
$1 \le i \le N$.
In \cite[Thm.~1]{GargHagerRao10a} it is shown that if
$\g{\lambda} \in \C{P}_N^n$ is a polynomial that satisfies the conditions
$\g{\lambda}(\tau_i) = \g{\Lambda}_i$ for $1 \le i \le N+1$, then
\begin{equation}\label{dif}
\sum_{j=1}^{N+1}{D}_{ij}^\dag{\g \Lambda}_j = \dot{\g{\lambda}}(\tau_i),
\quad 1 \le i \le N.
\end{equation}
This identity coupled with (\ref{Dadjoint})--(\ref{Dcontrolmin}) imply that
$(\ref{dcostate})$--$(\ref{dcontrolmin})$ hold.

Now let us consider the final term in (\ref{NC0}).
Since the polynomial that is identically equal to $\m{1}$
has derivative $\m{0}$ and since $\m{D}$ is a differentiation matrix,
we have $\m{D1} = \m{0}$,
which implies that ${\bf D}_{0}=-\sum_{j=1}^N{\bf D}_j$,
where $\m{D}_j$ is the $j$-th column of $\m{D}$.
Hence, the final term in \eqref{NC0} can be written
\begin{eqnarray}
\sum_{i=1}^N\g{\mu}_{i} D_{i0} &=&
-\sum_{i=1}^N\sum_{j=1}^N\g{\mu}_{i}D_{ij} =
-\sum_{i=1}^N\sum_{j=1}^N \omega_j \left( \frac{\g{\mu}_{i}}{\omega_i} \right)
\left( \frac{\omega_i D_{ij}}{\omega_j} \right) \nonumber \\
&=& \sum_{i=1}^N\sum_{j=1}^N
\omega_i D_{ij}^\dag(\g{\Lambda}_{j} - \g{\Lambda}_{N+1})
= \sum_{i=1}^N\sum_{j=1}^{N+1} \omega_i D_{ij}^\dag\g{\Lambda}_{j} .
\label{mu0expand}
\end{eqnarray}
Again, if $\g{\lambda} \in \C{P}_N^n$ is the interpolating polynomial
that satisfies $\g{\lambda}(\tau_i) = \g{\Lambda}_i$ for $1 \le i \le N+1$,
then by (\ref{dif}), (\ref{mu0expand}), and the exactness of Gaussian
quadrature for polynomials in $\C{P}_{N-1}^n$, we have
\begin{equation}\label{Di0}
\sum_{i=1}^N\g{\mu}_{i} D_{i0} =
\sum_{i=1}^N \omega_i \dot{\g{\lambda}}(\tau_i) =
\int_\Omega \dot{\g{\lambda}}(\tau) \, d\tau = \g{\lambda}(1) - \g{\lambda}(-1).
\end{equation}
Since $\g{\lambda}(1) = \g{\lambda}_{N+1} = \g{\mu}_{N+1}$, we deduce
from (\ref{NC0}) and (\ref{Di0}) that $\g{\lambda}(-1) = \g{\mu}_0$.
\end{proof}
\smallskip

In the proof of Proposition~\ref{equiv}, $\g{\Lambda}_0 = \g{\mu}_0$ and
$\g{\Lambda}_{N+1} = \g{\mu}_{N+1}$.
We combine (\ref{NC0}), (\ref{Dadjoint}), and (\ref{mu0expand}) to obtain
\begin{equation}\label{mu0}
\g{\Lambda}_{N+1} = \g{\Lambda}_0 - \sum_{i=1}^N \omega_i \nabla_x
H(\m{X}_i, \m{U}_i, \g{\Lambda}_i) .
\end{equation}
Based on Proposition~\ref{equiv}, the optimality conditions
(\ref{NC0})--(\ref{NC3}) are equivalent to
(\ref{dcostate})--(\ref{dcontrolmin}), which are equivalent to
(\ref{Dadjoint})--(\ref{Dcontrolmin}) and (\ref{mu0}).
This latter formulation, which we refer to as the transformed adjoint system
in our earlier work \cite{Hager99c}, is most convenient for the
subsequent analysis.
This leads us to write the first-order optimality conditions for (\ref{D})
as an inclusion $\C{T}(\m{X}, \m{U}, \g{\Lambda}) \in \C{F}(\m{U})$ where
\[
(\C{T}_0, \C{T}_1, \ldots, \C{T}_6) (\m{X}, \m{U}, \g{\Lambda}) \in
\mathbb{R}^n \times \mathbb{R}^{nN} \times \mathbb{R}^{n} \times
\mathbb{R}^{n} \times \mathbb{R}^{nN} \times \mathbb{R}^{n} \times
\mathbb{R}^{mN}.
\]
The 7 components of $\C{T}$ are defined as
\begin{eqnarray*}
\C{T}_{0}(\m{X}, \m{U}, \g{\Lambda}) &=&
\m{X}_0 - \m{x}_0, \\
\C{T}_{1i}(\m{X}, \m{U}, \g{\Lambda}) &=&
\left( \sum_{j=0}^{N}{D}_{ij}{\bf X}_{j} \right)
-{\bf f}({\bf X}_{i},{\bf U}_{i}),
\quad 1\leq i\leq N ,\\
\C{T}_{2}(\m{X}, \m{U}, \g{\Lambda}) &=&
{\m X}_{N+1}-{\m X}_0-\sum_{j=1}^N\omega_j{\m f}({\m X}_j,{\m U}_j), \\
\C{T}_{3}(\m{X}, \m{U}, \g{\Lambda}) &=&
\g{\Lambda}_{N+1} - \g{\Lambda}_{0} + \sum_{i=1}^N
\omega_i \nabla_x H(\m{X}_{i}, \m{U}_{i}, \g{\Lambda}_{i}),\\
\C{T}_{4i}(\m{X}, \m{U}, \g{\Lambda}) &=&
\left( \sum_{j=1}^{N+1}{D}_{ij}^\dagger{\g \Lambda}_{j} \right) +
\nabla_x H({\bf X}_{i},{\bf U}_{i}, {\g \Lambda}_{i}),
\quad 1 \leq i \le N , \\
\C{T}_{5}(\m{X}, \m{U}, \g{\Lambda}) &=&
\g{\Lambda}_{N+1} - \nabla C(\m{X}_{N+1}), \\
\C{T}_{6i}(\m{X}, \m{U}, \g{\Lambda}) &=&
-\nabla_u H({\bf X}_{i}, {\bf U}_{i}, {\g \Lambda}_{i}),
\quad 1\leq i\leq N .
\end{eqnarray*}
The components of $\C{F}$ are given by
\[
\C{F}_0 = \C{F}_1 = \ldots = \C{F}_5 = \m{0}, \quad
\mbox{while }\C{F}_{6i}(\m{U}) = N_\C{U}(\m{U}_i).
\]

The first three components of the inclusion
$\C{T}(\m{X}, \m{U}, \g{\Lambda}) \in \C{F}(\m{U})$ are the constraints
of (\ref{nlp}), the next three components describe the discrete costate
dynamics, and the last component is the discrete version of the
Pontryagin minimum principle.
The proof of Theorem~\ref{maintheorem} is based on an existence and stability
result for local solutions of the inclusion
$\C{T}(\m{X}, \m{U}, \g{\Lambda}) \in \C{F}(\m{U})$.
We will apply 
\cite[Proposition 3.1]{DontchevHagerVeliov00}, which is repeated below
for convenience.
Other results like this are contained in
\cite[Thm.~3.1]{DontchevHager97}, in \cite[Thm.~1]{Hager90},
in \cite[Prop.~5.1]{Hager99c}, and in \cite[Thm.~2.1]{Hager02b}.
\smallskip

\begin{proposition}\label{abstractProp}
Let ${\cal X}$ be a Banach space and
let ${\cal Y}$ be a linear normed space with the norms in both
spaces denoted $\| \cdot \|$.
Let ${\cal F} : {\cal X} \mapsto 2^{\cal Y}$
%let ${\cal L}: {\cal X} \mapsto {\cal Y}$ be a bounded linear operator,
and let ${\cal T}: {\cal X} \mapsto {\cal Y}$ with $\cal T$
continuously Fr\'{e}chet differentiable
in $B_r(\g{\theta}^*)$ for some $\g{\theta}^* \in {\cal X}$ and $r > 0$.
Suppose that the following conditions hold for some
$\g{\delta} \in {\cal Y}$ and scalars $\epsilon$ and $\gamma > 0$:
\begin{itemize}
\item[{\rm (C1)}]
${\cal T}(\g{\theta}^*) + \g{\delta} \in {\cal F}(\g{\theta}^*)$.
\item[{\rm (C2)}]
$\|\nabla {\cal T}(\g{\theta}) - \nabla \C{T}(\g{\theta}^*)\| \le \epsilon$
for all $\g{\theta} \in B_r(\g{\theta}^*)$.
\item[{\rm (C3)}]
The map $({\cal F} - \nabla \C{T}(\g{\theta}^*))^{-1}$ is single-valued
and Lipschitz continuous with Lipschitz constant $\gamma$.
\end{itemize}
If $\epsilon \gamma < 1$ and
$\|\g{\delta}\| \le (1 - \gamma \epsilon )r/\gamma$,
then there exists a unique $\g{\theta} \in B_r(\g{\theta}^*)$ such that
${\cal T}(\g{\theta}) \in {\cal F}(\g{\theta})$.
Moreover, we have the estimate
\begin{equation} \label{abs}
\|\g{\theta} - \g{\theta}^*\| \leq
\frac{\gamma}{1 - \gamma \epsilon} \|\g{\delta} \| .
\end{equation}
\end{proposition}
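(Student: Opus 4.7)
The plan is to recast the inclusion $\C{T}(\theta) \in \C{F}(\theta)$ as a fixed-point equation to which the Banach contraction principle can be applied, with the role of the ``linearization'' played by $\nabla \C{T}(\theta^*)$. Concretely, I would subtract $\nabla\C{T}(\theta^*)\theta$ from both sides of the inclusion to rewrite it as
\begin{equation*}
\C{T}(\theta) - \nabla\C{T}(\theta^*)\theta \;\in\; \C{F}(\theta) - \nabla\C{T}(\theta^*)\theta \;=\; (\C{F} - \nabla\C{T}(\theta^*))(\theta),
\end{equation*}
and then invoke (C3) to apply the single-valued, $\gamma$-Lipschitz map $(\C{F} - \nabla\C{T}(\theta^*))^{-1}$ to both sides. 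This produces the equivalent fixed-point equation $\theta = \Psi(\theta)$, where
\begin{equation*}
\Psi(\theta) := (\C{F} - \nabla\C{T}(\theta^*))^{-1}\bigl(\C{T}(\theta) - \nabla\C{T}(\theta^*)\theta\bigr).
\end{equation*}

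Next I would show that $\Psi$ is a contraction on $B_r(\theta^*)$. For any $\theta_1, \theta_2 \in B_r(\theta^*)$, (C3) gives
\begin{equation*}
\|\Psi(\theta_1) - \Psi(\theta_2)\| \le \gamma \,\bigl\| \C{T}(\theta_1) - \C{T}(\theta_2) - \nabla\C{T}(\theta^*)(\theta_1-\theta_2)\bigr\|,
\end{equation*}
and the Banach-space mean value theorem together with (C2) bound the right-hand side by $\gamma\epsilon\|\theta_1-\theta_2\|$, since the segment joining $\theta_1$ and $\theta_2$ lies in $B_r(\theta^*)$. The assumption $\gamma\epsilon < 1$ is exactly the contraction condition.

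To ensure $\Psi$ maps $B_r(\theta^*)$ into itself, I estimate $\|\Psi(\theta^*) - \theta^*\|$. By (C1), $\C{T}(\theta^*) + \delta - \nabla\C{T}(\theta^*)\theta^* \in (\C{F} - \nabla\C{T}(\theta^*))(\theta^*)$, so applying the inverse yields $\theta^* = (\C{F} - \nabla\C{T}(\theta^*))^{-1}(\C{T}(\theta^*) - \nabla\C{T}(\theta^*)\theta^* + \delta)$, while by definition $\Psi(\theta^*) = (\C{F} - \nabla\C{T}(\theta^*))^{-1}(\C{T}(\theta^*) - \nabla\C{T}(\theta^*)\theta^*)$. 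The Lipschitz property from (C3) then gives $\|\Psi(\theta^*) - \theta^*\| \le \gamma\|\delta\|$. Combined with the contraction bound, for $\theta \in B_r(\theta^*)$,
\begin{equation*}
\|\Psi(\theta) - \theta^*\| \le \|\Psi(\theta) - \Psi(\theta^*)\| + \|\Psi(\theta^*) - \theta^*\| \le \gamma\epsilon\, r + \gamma\|\delta\| \le r,
\end{equation*}
where the last inequality uses the hypothesis $\|\delta\| \le (1-\gamma\epsilon)r/\gamma$. Banach's fixed-point theorem now produces a unique fixed point $\theta \in B_r(\theta^*)$, and the estimate \eqref{abs} follows from the triangle inequality $\|\theta - \theta^*\| \le \|\Psi(\theta) - \Psi(\theta^*)\| + \|\Psi(\theta^*) - \theta^*\| \le \gamma\epsilon\|\theta-\theta^*\| + \gamma\|\delta\|$ by rearranging.

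The only conceptually delicate step is the passage from the inclusion to the fixed-point equation: one must verify that applying the single-valued inverse $(\C{F} - \nabla\C{T}(\theta^*))^{-1}$ is reversible, i.e.\ that $y \in (\C{F} - \nabla\C{T}(\theta^*))(\theta)$ if and only if $\theta = (\C{F} - \nabla\C{T}(\theta^*))^{-1}(y)$. This is immediate from the single-valuedness in (C3), so no real obstacle arises; everything else is a standard Newton/Kantorovich-style contraction argument.
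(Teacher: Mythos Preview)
Your argument is correct and is precisely the standard Newton--Kantorovich contraction-mapping proof one would expect for such a result. Note, however, that the paper itself does not supply a proof of this proposition: it is quoted verbatim from \cite[Proposition~3.1]{DontchevHagerVeliov00} (with related versions cited in \cite{DontchevHager97,Hager90,Hager99c,Hager02b}) and is only restated for the reader's convenience. Your fixed-point construction $\Psi(\theta) = (\C{F}-\nabla\C{T}(\theta^*))^{-1}\bigl(\C{T}(\theta)-\nabla\C{T}(\theta^*)\theta\bigr)$ is exactly the approach used in those references, so there is no discrepancy to report.
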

\smallskip

We apply Proposition~\ref{abstractProp} with
$\g{\theta}^* = (\m{X}^*, \m{U}^*, \g{\Lambda}^*)$ and
$\g{\theta} = (\m{X}^N, \m{U}^N, \g{\Lambda}^N)$, where
the discrete variables were defined before Theorem~\ref{maintheorem}.
The key steps in the analysis are the estimation of the residual
$\left\|\mathcal{T}\left(\g{\theta}^*\right)\right\|$,
the proof that
$\C{F} - \nabla\mathcal{T}(\g{\theta}^*)$ is invertible,
and the proof that $(\C{F} - \nabla\mathcal{T}(\g{\theta}^*))^{-1}$
is Lipschitz continuous with respect to the norms in $\C{X}$ and $\C{Y}$.
In our context, we use the sup-norm for $\C{X}$.
In particular,
\[
\|\g \theta\|=\|({\bf X},{\bf U}, {\g \Lambda})\|_\infty
=\max \left\{\|\bf X\|_\infty, \|\bf U\|_\infty,\|\g \Lambda\|_\infty \right\}.
\]
For this norm, the left side of (\ref{maineq}) and the left side of (\ref{abs})
are the same.
The norm on $\C{Y}$ enters into the estimation of both the distance from
$\|\mathcal{T}(\g{\theta}^*)\|$ to $\C{F}(\g{\theta}^*)$
($\|\g{\delta}\|$ in (\ref{abs})) and the Lipschitz constant $\gamma$ for
$(\C{F} - \nabla\C{T}(\g{\theta}^*))^{-1}$.
%The co-domain $\C{Y}$, which contains the image of $\C{T}$, is
%%
%\[
%\C{Y} = \mathbb{R}^{nN} \times \mathbb{R}^n \times
%\mathbb{R}^{nN} \times \mathbb{R}^n \times \mathbb{R}^{mN} .
%\]
%%
In our context,
we think of an element of $\C{Y}$ as a large vector with components
$\m{y}_l \in \mathbb{R}^n$ or $\mathbb{R}^m$.
There are $N$ components in $\mathbb{R}^m$ associated with
$\C{T}_6$, one component in $\mathbb{R}^n$ associated with
each of $\C{T}_0$, $\C{T}_2$, $\C{T}_3$, and $\C{T}_5$,
and $N$ components in $\mathbb{R}^n$ associated with $\C{T}_1$ and $\C{T}_4$.
Hence, $\C{Y}$ has dimension $mN + 4n + 2nN$ which matches the dimension
of $\C{X}$ since dim$(\m{U})$ = $mN$, dim$(\m{X})$ = $(n+2)N$, and
dim$(\g{\Lambda})$ = $(n+2)N$.
For the norm of $\m{y} \in \C{Y}$, we take
\[
\|\m{y}\|_\C{Y} = |\m{y}_0|
+ |\m{y}_2|
+ |\m{y}_3|
+ |\m{y}_5|
+ \|\m{y}_6\|_\infty
+ \|\m{y}_1\|_{\omega}
+ \|\m{y}_4\|_{\omega} .
%+ \left( \sum_{i = 1}^N \omega_i |\m{y}_{2i}|^2 \right)^{1/2}
%+ \left( \sum_{i = 1}^N \omega_i |\m{y}_{4i}|^2 \right)^{1/2} .
\]
Here $\omega$-norm used for $\m{y}_1$
(state dynamics) and $\m{y}_4$ (costate dynamics) is defined by
\[
\|\m{z}\|_{\omega}^2 =
\left( \sum_{i=1}^N \omega_i |\m{z}_{i}|^2 \right)^{1/2},
\quad \m{z} \in \mathbb{R}^{nN}.
\]
Note that the $\omega$-norm has the upper bound
\begin{equation}\label{normbound}
\|\m{z}\|_{\omega} \le \sqrt{2n} \|\m{z}\|_\infty
\end{equation}
since the $\omega_i$ are positive and sum to 2.
%and the Euclidean norm has the bound
%$|\m{z}_i| \le \sqrt{n} \|\m{z}_i\|_\infty$ for $\m{z}_i \in \mathbb{R}^n$.
%In our previous analysis \cite{HagerHouRao16a, HagerHouRao15c, HagerHouRao16c}
%of unconstrained control problems, it was assumed that the solution
%was smooth and the $\infty$-norm was used for both $\C{X}$ and $\C{Y}$.
%In contrast, the solutions of the control constrained problems are typically
%nonsmooth.
%In order to show that the distance from
%$\C{T}(\m{X}^*, \m{U}^*, \g{\Lambda}^*)$ to $\C{F}(\m{U}^*)$
%tends to zero as $N$ tends to $\infty$, we change from
%a sup-norm for the discrete state and costate dynamics to a discrete 2-norm.
%=========================
\section{Interpolation error in $\C{H}^1$}
\label{sect_interp}

Our error analysis is based on a result concerning the error in
interpolation at the point set $\tau_i$, $0 \le i \le N$,
where $\tau_i$ for $i > 0$ are the $N$ Gauss
quadrature points on $\Omega$, and $\tau_0 = -1$.
In \cite[Thm.~4.8]{BernardiMaday92},
Bernardi and Maday give an overview of the analysis of
error in $\C{H}^1$ for interpolation at Gauss quadrature points.
Here we take into account the additional interpolation point $\tau_0 = -1$,
and provide a complete derivation of the interpolation error estimate.
\smallskip
\begin{lemma}\label{interp}
If $u \in \C{H}^\eta(\Omega)$ for some $\eta \ge 1$,
then there exists a constant $c$,
independent of $N$ and $\eta$, such that
\begin{equation}\label{interperror}
|u - u^I|_{\C{H}^1(\Omega)} \le
(c/N)^{p - 3/2} \|u\|_{\C{H}^p(\Omega )},
\quad p = \min \{ \eta, N+1 \},
\end{equation}
where $u^I \in \C{P}_N$ is the interpolant of $u$
satisfying $u^I(\tau_i) = u(\tau_i)$, $0 \le i \le N$, and $N > 0$.
\end{lemma}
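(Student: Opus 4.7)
My plan is to reduce to the known Bernardi--Maday analysis for interpolation at the Gauss nodes only, and then absorb the extra interpolation condition at $\tau_0 = -1$ as a one-dimensional correction. Let $v \in \C{P}_{N-1}$ denote the pure-Gauss interpolant of $u$ at $\tau_1,\ldots,\tau_N$. Since $u^I - v \in \C{P}_N$ vanishes at each of these $N$ Gauss nodes, it must be a scalar multiple of the degree-$N$ Legendre polynomial $P_N$, with the scalar fixed by $u^I(-1) = u(-1)$:
\[
u^I - v = \frac{u(-1) - v(-1)}{P_N(-1)}\,P_N = (-1)^N\bigl(u(-1)-v(-1)\bigr) P_N.
\]
Thus the triangle inequality gives
\[
|u - u^I|_{\C{H}^1(\Omega)} \le |u - v|_{\C{H}^1(\Omega)} + |u(-1)-v(-1)|\cdot|P_N|_{\C{H}^1(\Omega)}.
\]

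For the first term, I adapt the arguments in \cite[Thm.~4.8]{BernardiMaday92}. The natural reference is the $\C{L}^2$-truncated Legendre series $\pi_N u = \sum_{k \le N}\hat{u}_k P_k$, for which Parseval, the identity $\|P_k'\|_{\C{L}^2(\Omega)}^2 = k(k+1)$, and the Legendre-coefficient decay stemming from $u \in \C{H}^p(\Omega)$ give
\[
|u - \pi_N u|_{\C{H}^1(\Omega)}^2 = \sum_{k > N} \hat{u}_k^2\,k(k+1) \le c\,N^{3-2p}\,\|u\|_{\C{H}^p(\Omega)}^2,
\]
which is precisely the claimed $(c/N)^{p-3/2}$ rate. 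The aliasing error $\pi_N u - v$ is then controlled by a Parseval-style bound in the spirit of Bernardi--Maday, preserving the same rate.

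For the correction term, I exploit the clustering of the Gauss nodes at the endpoints. Since $(u-v)(\tau_1)=0$ and the smallest Gauss node satisfies $\tau_1 + 1 \le c/N^2$, Cauchy--Schwarz applied to $(u-v)(-1) = -\int_{-1}^{\tau_1}(u-v)'(t)\,dt$ yields
\[
|u(-1)-v(-1)| \le \sqrt{\tau_1+1}\,|u-v|_{\C{H}^1(\Omega)} \le (c/N)\,|u-v|_{\C{H}^1(\Omega)}.
\]
Combined with $|P_N|_{\C{H}^1(\Omega)}^2 = N(N+1) \le c\,N^2$, the correction is bounded by a constant multiple of $|u-v|_{\C{H}^1(\Omega)}$, so $(\ref{interperror})$ follows upon substituting the estimate from the previous paragraph.

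The hard part will be the Bernardi--Maday-type bound for $|u-v|_{\C{H}^1(\Omega)}$ with sharp dependence on $p$: the pure-Gauss aliasing identity must be assembled carefully, and one must show that the residual of the projection, viewed in the $\C{H}^1$ seminorm (rather than in a weighted $\C{L}^2$ quantity), still enjoys the $N^{3/2 - p}$ decay. Handling the added endpoint $\tau_0 = -1$ is, by comparison, a clean one-dimensional task, relying only on a Sobolev-type estimate over the tiny interval $[-1,\tau_1]$ and the explicit $\C{H}^1$ seminorm of $P_N$.
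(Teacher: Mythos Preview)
Your decomposition is genuinely different from the paper's and, for the endpoint correction, quite elegant. The paper instead reduces to $u \in \C{H}_0^1(\Omega)$, projects onto $\C{P}_N^0$ in the $|\cdot|_{\C{H}^1}$ seminorm to obtain $\pi_N u$, writes $u - u^I = E_N - e_N$ with $E_N = u - \pi_N u$ and $e_N = E_N^I$, and then proves $|e_N|_{\C{H}^1} \le c\sqrt{N}\,|E_N|_{\C{H}^1}$ via a fairly intricate argument using a singular-weight $\C{L}^2$ estimate (Proposition~\ref{singular}) together with several lemmas from \cite{BernardiMaday92}. Your route---split off the one-dimensional correction $(-1)^N(u(-1)-v(-1))P_N$ and bound it via the clustering estimate $\tau_1 + 1 \le c/N^2$ combined with $|P_N|_{\C{H}^1}=\sqrt{N(N+1)}$---is correct and avoids that machinery entirely, \emph{provided} the pure-Gauss bound on $|u-v|_{\C{H}^1}$ is already in hand.

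That proviso is where your sketch breaks. The displayed equality
\[
|u - \pi_N u|_{\C{H}^1(\Omega)}^2 \;=\; \sum_{k > N} \hat{u}_k^2\,k(k+1)
\]
is false: the derivatives $P_k'$ are \emph{not} orthogonal in $\C{L}^2(\Omega)$ (they are orthogonal only with respect to the weight $1-\tau^2$), so Parseval does not apply to $(u-\pi_N u)'$ as written, and your $\C{L}^2$-truncation $\pi_N$ is the wrong reference projection for the $\C{H}^1$ seminorm. The paper's choice---the $\C{H}^1_0$ projection onto $\C{P}_N^0$, whose basis $\psi_k(\tau)=(1-\tau^2)P_k'(\tau)$ \emph{is} orthogonal in the $|\cdot|_{\C{H}^1}$ inner product---is exactly what makes a Parseval-type argument legitimate and yields the $(c/N)^{p-1}$ rate for $|E_N|_{\C{H}^1}$ via \cite[Prop.~3.1]{Elschner93}. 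After that one still owes the aliasing step (the $\sqrt{N}$ loss), which is the content you explicitly defer.

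In short: your endpoint-correction idea is sound and arguably cleaner than the paper's treatment of $\tau_0=-1$, but it merely reduces the lemma to the pure-Gauss $\C{H}^1$ interpolation estimate, which is the hard part and which your middle paragraph does not correctly establish. If you are prepared to invoke \cite[Thm.~4.8]{BernardiMaday92} as a black box for $|u-v|_{\C{H}^1}\le (c/N)^{p-3/2}\|u\|_{\C{H}^p}$, your argument goes through; if not, you will need essentially the same ingredients the paper assembles.
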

\smallskip

\begin{proof}
Throughout the analysis, $c$ denotes a generic constant whose value is
independent of $N$ and $\eta$, and which may have different values in
different equations.
Let $\ell$ denote the linear function for which $\ell(\pm 1) = u(\pm 1)$.
If the lemma holds for all $u \in \C{H}_0^1(\Omega) \cap \C{H}^\eta(\Omega)$,
then it holds for all $u \in \C{H}^\eta(\Omega)$ since
$|u - u^I|_{\C{H}^1(\Omega)} =$
$|(u -\ell) - (u - \ell)^I|_{\C{H}^1(\Omega)}$ and
$\|u-\ell\|_{\C{H}^p(\Omega )} \le$ $c \|u\|_{\C{H}^p(\Omega )}$.
Hence, without loss of generality, it is assumed
that $u \in \C{H}_0^1(\Omega) \cap \C{H}^\eta(\Omega)$.

Let $\pi_N u$ denote the projection of $u$
into $\C{P}_N^0$ relative to the norm $| \cdot |_{\C{H}^1(\Omega)}$.
Define $E_N = u - \pi_N u$ and
$e_N = E_N^I = (u - \pi_N u)^I = u^I - \pi_N u$.
Since $E_N - e_N = u - u^I$, it follows that
\begin{equation}\label{part0}
| u - u^I|_{\C{H}^1(\Omega)} \le
|E_N|_{\C{H}^1(\Omega)} + |e_N|_{\C{H}^1(\Omega)} .
\end{equation}
In \cite[Prop.~3.1]{Elschner93} it is shown that
\begin{equation}\label{part1}
|E_N|_{\C{H}^1(\Omega)} \le (c/N)^{p-1} \|u\|_{\C{H}^p(\Omega)},
\quad \mbox{where } p = \min\{\eta, N+1\}.
\end{equation}
We establish below the bound
\begin{equation}\label{part2}
|e_N|_{\C{H}^1(\Omega)}  \le c \sqrt{N} |E_N|_{\C{H}^1(\Omega)}.
\end{equation}
Estimate (\ref{interperror}) follows, for
an appropriate choice of $c$, by combining (\ref{part0})--(\ref{part2}).
%note that p = 1 or 2 or 3 ... which is used to show that all the constants
%can be replaced by a single constant.

The proof of (\ref{part2}) proceeds as follows:
Let $\phi_N$ be defined by
\begin{equation}\label{phiN}
\phi_{N}(\tau)=e_N(\tau) -e_N(1)w_{N}(\tau), \quad \mbox{where} \quad
w_N(\tau)=\frac{(1+\tau)P_N'(\tau)}{N(N+1)}.
\end{equation}
Since $P_N$, the Legendre polynomial of degree $N$, satisfies
$P_N'(1) =$ $N(N+1)/2$, it follows that $w_N(1) = 1$ and $\phi_N(1) = 0$.
Moreover, since $w_N(-1) = 0$ and $e_N(-1) = e_N(\tau_0) = 0$,
we conclude that $\phi_N(-1) = 0$ and $\phi_N \in \C{P}_N^0$.
In \cite[Lem.~4.4]{BernardiMaday92} it is shown that
any $\phi_N \in \C{P}_N^0$ satisfies
\[
|\phi_N|_{\C{H}^1(\Omega)} \le cN \left( \int_\Omega
\frac{\phi_N^2(\tau)}{1-\tau^2} \;d\tau \right)^{1/2} .
\]
Hence, by (\ref{phiN}), we have
\begin{equation}\label{uIbound}
|e_N|_{\C{H}^1(\Omega)} \le
cN \left( \int_\Omega
\frac{\phi_N^2(\tau)}{1-\tau^2} \;d\tau \right)^{1/2} +
|w_N|_{\C{H}^1(\Omega)} |e_N(1)| .
\end{equation}

Rodrigues' formula for $P_N$ and integration by parts give
\[
\| P_N'\|_{\C{L}^2(\Omega)} = \sqrt{N(N+1)}.
\]
It follows that
\[
\|w_N\|_{\C{L}^2(\Omega)} \le \frac{2}{\sqrt{N(N+1)}} \le \frac{2}{N}.
\]
Bellman's \cite{Bellman44} inequality
\[
\int_\Omega p'(\tau)^2\, dt \le \frac{(N+1)^4}{2} \int_\Omega p(\tau)^2 \, dt
\quad \mbox{for all } p \in \C{P}_N
\]
implies that
\[
|w_N|_{\C{H}^1(\Omega)} =
\|w_N'\|_{\C{L}^2(\Omega)} \le \frac{\sqrt{2}(N+1)^2}{N} \le cN.
\]
We combine this bound for $|w_N|_{\C{H}^1(\Omega)}$ with
(\ref{uIbound}) to obtain
\begin{equation}\label{uIbound2}
|e_N|_{\C{H}^1(\Omega)} \le cN \left[
\left( \int_\Omega \frac{\phi_N^2(\tau)}{1-\tau^2} \;d\tau \right)^{1/2}
+ |e_N(1)| \right] .
\end{equation}

Since $e_N = E_N^I$ and
$E_N (-1) = 0$, the interpolant can be expressed
\[
e_N(\tau) = E_N^I(\tau) = 
\sum_{i=1}^N E_N(\tau_i) \left( \frac{(\tau+1) P_N(\tau)}
{(\tau_i +1) P_N'(\tau_i)(\tau-\tau_i)}\right) ,
\]
where the expression in parentheses is the Lagrange interpolating polynomial;
it vanishes at $\tau = \tau_j$ for $0 \le j \le N$ and $j \ne i$ since the
numerator vanishes, while it is one at $\tau = \tau_i$ since
\[
\left. \frac{P_N(\tau)/(\tau - \tau_i)}
{P_N'(\tau)} \right|_{\tau = \tau_i} =
\frac{ \prod_{j \ne i} (\tau_i - \tau_j)}
{\prod_{j \ne i} (\tau_i - \tau_j)} = 1.
\]
At $\tau = 1$, it follows from the Schwarz inequality that
\[
|e_N(1)| \le \sum_{i=1}^N \frac{2|E_N(\tau_i)|}
{(1-\tau_i^2) |P_N'(\tau_i)|} \le
2 \sqrt{N} \left( \sum_{i=1}^N \frac{E_N^2(\tau_i)}
{(1-\tau_i^2)^2 P_N'(\tau_i)^2} \right)^{1/2}.
\]
Replace $2/[(1-\tau_i^2)P_N'(\tau_i)^2]$ by $\omega_i$ using
(\ref{GaussQuad}) to obtain
\begin{equation}\label{e1}
|e_N(1)|\le \sqrt{2N} \left( \sum_{i=1}^N \frac{\omega_i E_N^2(\tau_i)}
{1-\tau_i^2} \right)^{1/2} .
\end{equation}
Since  $E_N \in \C{H}_0^1(\Omega)$, it follows from
\cite[Lem.~4.3]{BernardiMaday92} that
\begin{equation}\label{quad}
\left(\sum_{i=1}^N
\frac{\omega_i E_N^2(\tau_i)}{1-\tau_i^2} \right)^{1/2} \le
%\left( \int_{\Omega} \frac{e_N^2(\tau)}{1-\tau^2} \; d\tau \right) \le
c\left[ \left( \int_\Omega \frac{E_N^2(\tau)}{1-\tau^2} \; d\tau\right)^{1/2}
+ N^{-1} |E_N|_{\C{H}^1(\Omega)} \right] .
\end{equation}
By Proposition~\ref{singular} in Appendix~2,
\begin{equation}\label{A}
N \left[ \left( \int_\Omega \frac{E_N^2(\tau)}{1-\tau^2} \;
d\tau\right)^{1/2}
+ N^{-1} |E_N|_{\C{H}^1(\Omega)} \right] \le 2|E_N|_{\C{H}^1(\Omega)}.
\end{equation}
Together, (\ref{quad}) and (\ref{A}) give
\begin{equation}\label{quad2}
\left(\sum_{i=1}^N
\frac{\omega_i E_N^2(\tau_i)}{1-\tau_i^2} \right)^{1/2} \le
(c/N) |E_N|_{\C{H}^1(\Omega)}.
\end{equation}
Combine (\ref{e1}) and (\ref{quad2}) to obtain
\begin{equation}\label{e2}
|e_N(1)|\le \left( c/\sqrt{N} \right) |E_N|_{\C{H}^1(\Omega)}.
\end{equation}

Since $\phi_N \in \C{P}_N^0$,
we deduce that $\phi_N^2(\tau)/(1-\tau^2) \in \C{P}_{2N-2}$.
Consequently, $N$-point Gaussian quadrature is exact, and we have
\begin{eqnarray*}
\left( \int_\Omega \frac{{\phi}_N^2(\tau)}{1-\tau^2} \;d\tau
\right)^{1/2} &=&
\left(\sum_{i=1}^N
\frac{\omega_i {\phi}_N^2(\tau_i)}{1-\tau_i^2} \right)^{1/2} \\
&\le&
\left(\sum_{i=1}^N
\frac{\omega_i e_N^2(\tau_i)}{1-\tau_i^2} \right)^{1/2} +
|e_N(1)| \left(\sum_{i=1}^N
\frac{\omega_i w_N^2(\tau_i)}{1-\tau_i^2} \right)^{1/2} \\
&=&
\left(\sum_{i=1}^N
\frac{\omega_i E_N^2(\tau_i)}{1-\tau_i^2} \right)^{1/2} +
|e_N(1)| \left(\sum_{i=1}^N
\frac{\omega_i w_N^2(\tau_i)}{1-\tau_i^2} \right)^{1/2} .
\end{eqnarray*}
The last equality holds since $e_N = E_N$ at the collocation points
$\tau_i$, $1 \le i \le N$.
In \cite[(4.15)]{BernardiMaday92}, it is proved that
\begin{equation}\label{B}
\sum_{i=1}^N
\frac{\omega_i w_N^2(\tau_i)}{1-\tau_i^2} \le c.
\end{equation}
Combine (\ref{quad2}), (\ref{e2}), and (\ref{B}) to obtain
\begin{equation}\label{phi2bound}
\left( \int_\Omega \frac{{\phi}_N^2(\tau)}{(1-\tau^2)} \;d\tau
\right)^{1/2} \le (c/\sqrt{N}) |E_N|_{\C{H}^1(\Omega)}.
\end{equation}
Finally, (\ref{uIbound2}), (\ref{e2}), and (\ref{phi2bound}) yield
(\ref{part2}), which completes the proof.
\end{proof}
%===================================
\section{Analysis of the residual}
\label{residual}
%===================================
In this section, we establish a bound for the distance from
$\C{T}(\m{X}^*, \m{U}^*, \g{\Lambda}^*)$ to $\C{F}(\m{U}^*)$.
This bound ultimately enters into the right-hand side of the error estimate
(\ref{maineq}).
%==============Lemma starts===========
\begin{lemma}
\label{residual_lemma}
If $\m{x}^*$ and $\g{\lambda}^* \in \C{H}^\eta(\Omega; \;\mathbb{R}^n)$ for
some $\eta \ge 2$, then there exists a constant $c$,
independent of $N$ and $\eta$, such that
%-------------------------------------
\begin{equation}\label{resbound}
{\rm dist}[\C{T}(\m{X}^*,\m{U}^*,\g{\Lambda}^*), \; \C{F}(\m{U}^*)]
\le \left( \frac{c}{N} \right)^{p-3/2}
\left( \|\m{x}^*\|_{\C{H}^p(\Omega)} +
\|\g{\lambda}^*\|_{\C{H}^p(\Omega)} \right) ,
\end{equation}
where $p = \min \{ \eta, N+1 \}$.
The left-hand side of $(\ref{resbound})$ denotes the distance from
$\C{T}(\m{X}^*,\m{U}^*,\g{\Lambda}^*)$ to $\C{F}(\m{U}^*)$ relative to
$\| \cdot \|_\C{Y}$.
\end{lemma}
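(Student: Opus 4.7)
The plan is to evaluate the $\C{Y}$-norm of the residual block by block, using the fact that $\C{F}(\m{U}^*)$ is zero outside block~6. First I would dispatch the blocks whose contribution is zero: $\C{T}_0 = \m{x}^*(-1) - \m{x}_0$ vanishes by the initial condition in (\ref{P}); $\C{T}_5 = \g{\lambda}^*(1) - \nabla C(\m{x}^*(1))$ vanishes by the terminal condition in (\ref{costate}); and every component of $\C{T}_6$, namely $-\nabla_u H(\m{x}^*(\tau_i), \m{u}^*(\tau_i), \g{\lambda}^*(\tau_i))$, already lies in $N_\C{U}(\m{u}^*(\tau_i)) = \C{F}_{6i}(\m{U}^*)$ by Pontryagin (\ref{controlmin}). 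Only $\C{T}_1$, $\C{T}_2$, $\C{T}_3$, and $\C{T}_4$ remain to be estimated.

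For the state dynamics block $\C{T}_1$, let $\m{x}^{*I} \in \C{P}_N^n$ be the polynomial that interpolates $\m{x}^*$ at $\tau_0, \ldots, \tau_N$. The differentiation-matrix property of $\m{D}$ together with the state equation give $\C{T}_{1i} = \dot{\m{x}}^{*I}(\tau_i) - \dot{\m{x}}^*(\tau_i)$. The crucial device is to introduce a second polynomial $\m{q} \in \C{P}_{N-1}^n$ interpolating $\dot{\m{x}}^*$ at just the Gauss points $\tau_1, \ldots, \tau_N$, so that $\m{q}(\tau_i) = \dot{\m{x}}^*(\tau_i)$ and thus $\C{T}_{1i} = (\dot{\m{x}}^{*I} - \m{q})(\tau_i)$. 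Since $\dot{\m{x}}^{*I} - \m{q}$ lies in $\C{P}_{N-1}^n$, its componentwise square has degree at most $2N-2$, and Gauss exactness (\ref{exact}) converts the discrete $\omega$-sum into an exact continuous integral:
\[
\|\C{T}_1\|_\omega = \|\dot{\m{x}}^{*I} - \m{q}\|_{\C{L}^2(\Omega)}
\le |\m{x}^* - \m{x}^{*I}|_{\C{H}^1(\Omega)} + \|\dot{\m{x}}^* - \m{q}\|_{\C{L}^2(\Omega)}.
\]
Lemma~\ref{interp} bounds the first piece by $(c/N)^{p-3/2}\|\m{x}^*\|_{\C{H}^p(\Omega)}$; the second, a standard $\C{L}^2$-error for Gauss-point interpolation of a function in $\C{H}^{p-1}$, decays at the faster rate $N^{1-p}$ and is therefore absorbed. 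The costate block $\C{T}_4$ is handled identically after the reflection $t \mapsto -t$, which preserves the Gauss points while swapping $\tau_0$ with $\tau_{N+1}$; interpolation at $\tau_1, \ldots, \tau_N, \tau_{N+1}$ becomes interpolation at $\tau_0, \ldots, \tau_N$ in the reflected variable, so Lemma~\ref{interp} again applies and contributes the $\|\g{\lambda}^*\|_{\C{H}^p(\Omega)}$ term.

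For $\C{T}_2$, applying Gauss exactness to $\dot{\m{x}}^{*I} \in \C{P}_{N-1}^n$ gives $\sum_j \omega_j \dot{\m{x}}^{*I}(\tau_j) = \m{x}^{*I}(1) - \m{x}^{*I}(-1) = \m{x}^{*I}(1) - \m{x}^*(-1)$, whence
\[
\C{T}_2 = \bigl(\m{x}^*(1) - \m{x}^{*I}(1)\bigr) + \sum_{j=1}^N \omega_j \bigl(\dot{\m{x}}^{*I} - \dot{\m{x}}^*\bigr)(\tau_j).
\]
Because $\m{x}^{*I}(-1) = \m{x}^*(-1)$, the fundamental theorem of calculus combined with Cauchy--Schwarz yields $|\m{x}^*(1) - \m{x}^{*I}(1)| \le \sqrt{2}\,|\m{x}^* - \m{x}^{*I}|_{\C{H}^1(\Omega)}$, while the weighted Cauchy--Schwarz inequality (with weights summing to~2) bounds the summation by $\sqrt{2}\,\|\C{T}_1\|_\omega$. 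The block $\C{T}_3$ is treated by the same recipe after the reflection used for $\C{T}_4$. Summing the four contributions and invoking Lemma~\ref{interp} on both $\m{x}^*$ and $\g{\lambda}^*$ yields (\ref{resbound}).

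The main obstacle is obtaining the sharp rate $(c/N)^{p-3/2}$ for $\|\C{T}_1\|_\omega$ and $\|\C{T}_4\|_\omega$ while staying within $\C{L}^2$-type norms, avoiding sup-norm routes that would incur Lebesgue constants or a Markov inequality. The resolution is the two-interpolant device above: subtracting the Gauss-point interpolant $\m{q}$ from $\dot{\m{x}}^{*I}$ produces a polynomial of degree $\le N-1$ whose discrete $\omega$-norm equals its continuous $\C{L}^2$-norm exactly, reducing the estimate to the $\C{H}^1$ interpolation bound of Lemma~\ref{interp} plus a strictly faster-decaying Gauss interpolation remainder.
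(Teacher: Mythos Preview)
Your proof is correct and follows essentially the same approach as the paper's own proof: the same block-by-block analysis, the same two-interpolant device for $\C{T}_1$ (your $\m{q}$ is the paper's $(\dot{\m{x}}^*)^J$), the same use of Gauss exactness to convert the $\omega$-norm to an $\C{L}^2$-norm, and the same decomposition of $\C{T}_2$ into a boundary term plus a weighted sum. The only cosmetic difference is that for $\C{T}_4$ and $\C{T}_3$ you invoke the reflection $t \mapsto -t$ explicitly to reduce interpolation at $\tau_1, \ldots, \tau_{N+1}$ to the setting of Lemma~\ref{interp}, whereas the paper simply asserts the analysis is ``identical'' with the interpolation nodes shifted; your justification is a bit more explicit but amounts to the same thing.
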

\smallskip
%==============Lemma ends=============
\begin{proof}
%-------------------------------------
Since $\C{T}(\m{X}^*,\m{U}^*,\g{\Lambda}^*)$ appears throughout the
analysis, it is abbreviated $\C{T}^*$.
The feasibility of $\m{x}^*$ in (\ref{P}) implies that
$\m{X}_0^* = \m{x}_0$, or $\C{T}_0^* = \m{0}$.
By the costate equation (\ref{costate}),
$\g{\Lambda}_{N+1}^* = \g{\lambda}^*(1) =$
$\nabla C(\m{x}^*(1)) = \nabla C(\m{X}^*_{N+1})$, which implies that
$\C{T}_5^* = \m{0}$.
By the Pontryagin minimum principle (\ref{controlmin}),
\[
-\nabla_u H(\m{X}_i^*, \m{U}_i^*, \g{\Lambda}_i^*) =
-\nabla_u H(\m{x}^*(\tau_i), \m{u}^*(\tau_i), \g{\lambda}^*(\tau_i)) \in
\C{F}(\m{u}^*(\tau_i)) = \C{F}(\m{U}^*_i),
\]
$1 \le i \le N$.
Thus $\C{T}_0^* = \C{T}_5^* = \m{0}$,
$\C{T}_{6}^* \in \C{F}_{6}(\m{U}^*)$.

Now let us consider $\mathcal{T}_1$.
Since ${\bf D}$ is a differentiation matrix associated with the
collocation points, we have
\begin{equation}\label{t1}
\sum_{j=0}^{N}D_{ij}
{\bf X}_{kj}^*=\dot{\bf x}^I(\tau_i), \quad 1\leq i \leq N,
\end{equation}
where ${\bf x}^I \in \mathcal{P}_N^n$ is the interpolating polynomial
that passes through ${\bf x}^*(\tau_j)$ for $0 \leq j \leq N$, and
$\dot{\m{x}}^I$ is the derivative of $\m{x}^I$.
Since ${\bf x}^*$ satisfies the dynamics of (\ref{P}),
\begin{equation}\label{t1dotx}
{\bf f}({\bf X}_{i}^*, {\bf U}_{i}^*)=
{\bf f}({\bf x}^* (\tau_i), {\bf u}^* (\tau_i))=
\dot{\bf x}^*(\tau_i).
\end{equation}
Combine \eqref{t1} and \eqref{t1dotx} to obtain
\begin{equation}\label{t1dif}
\mathcal{T}_{1i}^* = \dot{\bf x}^I(\tau_i)-\dot{\bf x}^*(\tau_i), \quad
1 \le i \le N.
\end{equation}
Let $(\dot{\m{x}}^*)^J \in \C{P}_{N-1}^n$
denote the interpolant that passes through
$\dot{\bf x}^*(\tau_i)$ for $1 \le i \le N$.
Since both $\dot{\bf x}^I$ and $(\dot{\m{x}}^*)^J$ are polynomials of
degree $N-1$ and Gaussian quadrature is exact for polynomials of degree
$2N -1$, it follows that
\[
\|\mathcal{T}_{1}^*\|_{\omega} =
\|\dot{\m{x}}^I - (\dot{\m{x}}^*)^J\|_{\C{L}^2(\Omega)} \le
\|\dot{\m{x}}^I - \dot{\m{x}}^*\|_{\C{L}^2(\Omega)} +
\|\dot{\m{x}}^* - (\dot{\m{x}}^*)^J\|_{\C{L}^2(\Omega)}.
\]
By Lemma~\ref{interp},
$\|\dot{\m{x}}^I - \dot{\m{x}}^*\|_{\C{L}^2(\Omega)} \le$
$(c/N)^{p-3/2}\|\m{x}^*\|_{\C{H}^p(\Omega)}$.
By \cite[Cor.~3.2]{BernardiMaday92} and
\cite[Prop.~3.1]{Elschner93}, it follows that
$\|\dot{\m{x}}^* - (\dot{\m{x}}^*)^J\|_{\C{L}^2(\Omega)} \le$
$(c/N)^{p-1}\|\m{x}^*\|_{\C{H}^p(\Omega)}$.
Hence, we have
\begin{equation}\label{h49}
\|\C{T}_1^*\|_{\omega} =
\|\dot{\m{x}}^I - (\dot{\m{x}}^*)^J\|_{\C{L}^2(\Omega)} \le
(c/N)^{p-3/2}\|\m{x}^*\|_{\C{H}^p(\Omega)}.
\end{equation}

The analysis of $\C{T}_4$ is identical to that of $\C{T}_1$, the
only adjustment is that $\g{\lambda}^I$ is the interpolating polynomial
that passes through $\g{\lambda}^*(\tau_j)$ for $1 \le j \le N+1$.
Next, let us consider
\begin{equation}\label{h1}
\C{T}_2^* = \m{x}^* (1) - \m{x}^*(-1)
- \sum_{j=1}^N \omega_j \m{f}(\m{x}^*(\tau_j), \m{u}^*(\tau_j)) .
\end{equation}
By the fundamental theorem of calculus and the
exactness of Gaussian quadrature, we have
\begin{equation} \label{h3}
\m{0} =
{\bf x}^I(1)-{\bf x}^I(-1)-\int_{\Omega}\dot{\bf x}^I(t)\;dt =
{\bf x}^I(1)-{\bf x}^I(-1)-\sum_{j=1}^N\omega_j\dot{\bf x}^I(\tau_j) .
\end{equation}
Subtract (\ref{h3}) from (\ref{h1}) and substitute
$\dot{\bf x}^*(\tau_j) =$
$\m{f}(\m{x}^*(\tau_j), \m{u}^*(\tau_j))$ to obtain
\begin{equation}\label{h4}
\C{T}_2^* =
({\bf x}^*-{\bf x}^I)(1)+\sum_{j=1}^N\omega_j\left(\dot{\bf x}^I(\tau_j)
-\dot{\bf x}^*(\tau_j)\right) .
\end{equation}
Since the $\omega_i$ are positive and sum to 2, it follows from the
Schwarz inequality and (\ref{h49}) that
\begin{eqnarray}
\left| \sum_{j=1}^N\omega_j\left(\dot{\bf x}^I(\tau_j)
-\dot{\bf x}^*(\tau_j)\right) \right| &\le&
\left( \sum_{j=1}^N \omega_i \right)^{1/2}
\left( \sum_{j=1}^N \omega_j\left|\dot{\bf x}^I(\tau_j)
-\dot{\bf x}^*(\tau_j)\right|^2 \right)^{1/2} \nonumber \\
&=& \sqrt{2} \|\dot{\bf x}^I - (\dot{\m{x}}^*)^J\|_{\C{L}^2(\Omega)}
\le (c/N)^{p-3/2}\|\m{x}^*\|_{\C{H}^p(\Omega)}. \quad \quad
\label{h50}
\end{eqnarray}
Also, writing $({\bf x}^*-{\bf x}^I)(1)$ as the integral of the
derivative from $-1$ to 1 and applying the Schwarz inequality yields
\begin{equation}\label{h51}
|\mathbf{x}^{*}(1)-\mathbf{x}^{I}(1)|\le
\sqrt{2}\|\dot{\mathbf{x}}^*-\dot{{\mathbf{x}}}^I\|_{\C{L}^2(\Omega)}
\le (c/N)^{p-3/2}\|\m{x}^*\|_{\C{H}^p(\Omega)},
\end{equation}
where the last equality is by Lemma~\ref{interp}.
Combine (\ref{h4}), (\ref{h50}), and (\ref{h51}) to obtain
$|\C{T}_2^*| \le (c/N)^{p-3/2}\|\m{x}^*\|_{\C{H}^p(\Omega)}$.
The analysis of $\C{T}_3$ is the same as that of $\C{T}_2$.
This completes the proof.
\end{proof}
\smallskip
%--------------------------------------------------------------------------
\section{Invertibility of linearized dynamics}
\label{inverse}
%--------------------------------------------------------------------------
In this section, we introduce the linearized inclusion and established
the invertibility of the linearized dynamics for both the state and costate.
Given $\m{Y} \in \C{Y}$, the linearized problem is to find
$(\m{X}, \m{U}, \g{\Lambda})$ such that
\begin{equation}\label{linearproblem}
\nabla \C{T}(\m{X}^*, \m{U}^*, \g{\Lambda}^*)[\m{X}, \m{U}, \g{\Lambda}] +
\m{Y} \in \C{F}(\m{U}).
\end{equation}
Here $\nabla \C{T}(\m{X}^*, \m{U}^*, \g{\Lambda}^*)[\m{X}, \m{U}, \g{\Lambda}]$
denotes the derivative of $\C{T}$ evaluated at
$(\m{X}^*, \m{U}^*, \g{\Lambda}^*)$ operating on $[\m{X}, \m{U}, \g{\Lambda}]$.
Since $\nabla \C{T}(\m{X}^*, \m{U}^*, \g{\Lambda}^*)$ appears frequently
in the analysis, it is abbreviated $\nabla \C{T}^*$.
This derivative involves the matrices:
\[
{\bf A}_i= 
{\bf A}(\tau_i), \quad
{\bf B}_i= 
{\bf B}(\tau_i), \quad
{\bf Q}_i=
{\bf Q}(\tau_i), \quad
{\bf S}_i=
{\bf S}(\tau_i), \quad
{\bf R}_i=
{\bf R}(\tau_i).
\]
With this notation,
the 7 components of $\nabla \C{T}^*[\m{X}, \m{U}, \g{\Lambda}]$ are as follows:
\begin{eqnarray*}
\nabla \C{T}_{0}^*[\m{X}, \m{U}, \g{\Lambda}] &=&
\m{X}_0 ,\\
\nabla \C{T}_{1i}^*[\m{X}, \m{U}, \g{\Lambda}] &=&
\left( \sum_{j=1}^N{D}_{ij}{\bf X}_j \right)
- \m{A}_i \m{X}_i - \m{B}_i \m{U}_i,
\quad 1\leq i\leq N ,\\
\nabla \C{T}_2^*[\m{X}, \m{U}, \g{\Lambda}] &=&
{\bf X}_{N+1}- \m{X}_0 - \sum_{j=1}^N\omega_j
(\m{A}_j \m{X}_j + \m{B}_j \m{U}_j),
\\
\nabla \C{T}_3^*[\m{X}, \m{U}, \g{\Lambda}] &=&
{\g{\Lambda}}_{N+1}- {\g{\Lambda}}_0 + \sum_{j=1}^N\omega_j
(\m{A}_j\tr \g{\Lambda}_j  + \m{Q}_j \m{X}_j + \m{S}_j \m{U}_j),
\\
\nabla \C{T}_{4i}^*[\m{X}, \m{U}, \g{\Lambda}] &=&
\left( \sum_{j=1}^{N+1}{D}_{ij}^\dag{\g \Lambda}_j \right) +
\m{A}_i\tr \g{\Lambda}_i  + \m{Q}_i \m{X}_i + \m{S}_i \m{U}_i,
\quad 1 \leq i \leq N , \\
\nabla \C{T}_5^*[\m{X}, \m{U}, \g{\Lambda}] &=&
{\g \Lambda}_{N+1} -\m{T} \m{X}_{N+1}, \\[.05in]
\nabla \C{T}_{6i}^*[\m{X}, \m{U}, \g{\Lambda}] &=&
-(\m{S}_i\tr\m{X}_i + \m{R}_i \m{U}_i + \m{B}_i\tr \g{\Lambda}_i),
\quad 1\leq i\leq N .
\end{eqnarray*}

Let us first study the invertibility of the linearized dynamics.
This amounts to solving for the state in (\ref{linearproblem})
for given values of the control.
\smallskip

\begin{lemma}
\label{feasible}
If {\rm (P1)}, {\rm (P2)}, and {\rm (A2)} hold,
then for each $\m{q}_0$ and $\m{q}_1 \in \mathbb{R}^n$ and
$\m{p} \in \mathbb{R}^{nN}$ with $\m{p}_i \in \mathbb{R}^n$,
$1 \le i \le N$, the linear system
\begin{eqnarray}
\left( \sum_{j=0}^{N}{D}_{ij}{\bf X}_j \right) - \m{A}_i \m{X}_i  &=& \m{p}_{i}
\quad 1\leq i\leq N , \label{h99} \\
{\bf X}_{N+1} - \m{X}_0 -\sum_{j=1}^N\omega_j
\m{A}_j \m{X}_j &=&{\m q}_1,
\quad \m{X}_0 = \m{q}_0,
\label{h100}
\end{eqnarray}
has a unique solution $\m{X} \in \mathbb{R}^{n(N+2)}$.
Moreover, there exists a constant $c$, independent of $N$, such that
\begin{equation}\label{xbound}
\|\m{X}\|_\infty \le c (|\m{q}_0| + |\m{q}_1| + \|\m{p}\|_\omega)
\end{equation}
\end{lemma}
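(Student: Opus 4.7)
The plan is to decouple the system by first solving \eqref{h99} for $\m{X}_1,\ldots,\m{X}_N$ and then recovering $\m{X}_{N+1}$ from \eqref{h100}. Setting $\m{X}_0 = \m{q}_0$ and splitting off the $j=0$ term in $\sum_{j=0}^N D_{ij}\m{X}_j$, I rewrite \eqref{h99} in block-Kronecker form as
\[
(\m{D}_{1:N}\otimes \m{I}_n)\,\m{Z} \;=\; \widetilde{\m{A}}\,\m{Z} + \m{p} - (\m{D}_0\otimes \m{I}_n)\m{q}_0,
\]
where $\m{Z} \in \mathbb{R}^{nN}$ is the block-stacked vector of $\m{X}_1,\ldots,\m{X}_N$ and $\widetilde{\m{A}}$ is the block-diagonal matrix with diagonal blocks $\m{A}_1,\ldots,\m{A}_N$. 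Pre-multiplying by $\m{D}_{1:N}^{-1}\otimes \m{I}_n$, which exists by (P1), converts this to the fixed-point equation $\m{Z} = \m{M}\m{Z} + \m{r}$ with $\m{M} = (\m{D}_{1:N}^{-1}\otimes\m{I}_n)\widetilde{\m{A}}$ and $\m{r} = (\m{D}_{1:N}^{-1}\otimes\m{I}_n)\m{p} - (\m{D}_{1:N}^{-1}\m{D}_0)\otimes \m{q}_0$.

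Next, I show that $\m{M}$ is a contraction in the block sup-norm $\|\m{Z}\|_\infty = \max_i |\m{X}_i|$. The $(i,j)$ block of $\m{M}$ equals $(\m{D}_{1:N}^{-1})_{ij}\m{A}_j$, so the sum over $j$ of the operator norms of the blocks in row $i$ is at most $\beta\sum_j|(\m{D}_{1:N}^{-1})_{ij}| \le 2\beta < 1$ by (A2) and (P1). The Neumann series then yields unique existence of $\m{Z}$ together with $\|\m{Z}\|_\infty \le (1-2\beta)^{-1}\|\m{r}\|_\infty$.

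The step I expect to be the main obstacle is bounding $\|\m{r}\|_\infty$ by a constant times $|\m{q}_0| + \|\m{p}\|_\omega$. For the $\m{q}_0$ piece, I exploit $\m{D}\m{1}=\m{0}$, which gives $\m{D}_{1:N}\m{1} = -\m{D}_0$ and hence $\m{D}_{1:N}^{-1}\m{D}_0 = -\m{1}$; this part of $\m{r}$ reduces to $\m{1}\otimes \m{q}_0$, whose block sup-norm is exactly $|\m{q}_0|$. For the $\m{p}$ piece, the $i$-th block of $(\m{D}_{1:N}^{-1}\otimes \m{I}_n)\m{p}$ is $\sum_{j=1}^N (\m{D}_{1:N}^{-1})_{ij}\m{p}_j$; inserting $\sqrt{\omega_j}/\sqrt{\omega_j}$ and applying Cauchy--Schwarz gives
\[
\Big|\sum_{j=1}^N (\m{D}_{1:N}^{-1})_{ij}\m{p}_j\Big| \le \left(\sum_{j=1}^N \frac{(\m{D}_{1:N}^{-1})_{ij}^2}{\omega_j}\right)^{\!1/2}\|\m{p}\|_{\omega}.
\]
The parenthesized sum is precisely the squared Euclidean norm of the $i$-th row of $\m{D}_{1:N}^{-1}\m{W}^{-1/2} = [\m{W}^{1/2}\m{D}_{1:N}]^{-1}$, which is at most $2$ by (P2). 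This is exactly the place where the $\omega$-norm on $\m{p}$ is the natural and essential choice; neither (P1) alone nor a crude sup-norm estimate would give the right dependence.

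Finally, rearranging \eqref{h100} gives $\m{X}_{N+1} = \m{q}_0 + \m{q}_1 + \sum_{j=1}^N \omega_j\m{A}_j\m{X}_j$; since $\sum_j \omega_j = 2$ and $\|\m{A}_j\|_\infty \le \beta$ by (A2), this yields $|\m{X}_{N+1}| \le |\m{q}_0| + |\m{q}_1| + 2\beta\|\m{Z}\|_\infty$. Combining with the bound on $\|\m{Z}\|_\infty$ produces \eqref{xbound}, and uniqueness of the full $\m{X}$ follows from uniqueness of $\m{Z}$ together with the explicit formula for $\m{X}_{N+1}$.
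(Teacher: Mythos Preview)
Your proposal is correct and follows essentially the same route as the paper: rewrite \eqref{h99} in Kronecker form, premultiply by $\m{D}_{1:N}^{-1}\otimes\m{I}_n$, use (P1) and (A2) to get a contraction with Neumann-series bound $(1-2\beta)^{-1}$, handle the $\m{q}_0$ term via $\m{D}_{1:N}^{-1}\m{D}_0=-\m{1}$, bound the $\m{p}$ term by Cauchy--Schwarz and (P2), and finish with the explicit formula for $\m{X}_{N+1}$. The only cosmetic difference is that you work with the block sup-norm $\max_i|\m{X}_i|$ (Euclidean on blocks) rather than the componentwise sup-norm used in the paper; since (A2) bounds both $\|\m{A}_j\|_\infty$ and $\|\m{A}_j\tr\|_\infty$ by $\beta$, the spectral norm $\|\m{A}_j\|_2\le\beta$ follows and your block-norm argument goes through.
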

\begin{proof}
Let $\bar{\m{X}}$ be the vector obtained by vertically stacking
$\m{X}_1$ through $\m{X}_N$,
let ${\m{A}}$ be the block diagonal matrix
with $i$-th diagonal block $\m{A}_i$, $1 \le i \le N$,
and define
$\bar{\m{D}} = {\bf D}_{1:N}\otimes {\bf I}_n$ where
$\otimes$ is the Kronecker product.
With this notation, the linear system (\ref{h99}) can be expressed
\begin{equation}\label{statedynamics}
(\bar{\m{D}} - {\m{A}}) \bar{\m{X}} = \m{p} - (\m{D}_0\otimes\m{I}_n) \m{q}_0.
\end{equation}
Here $\m{D}_0$ is the first column of $\m{D}$ and the $\m{X}_0 = \m{q}_0$
component of $\m{X}$ has been moved to the right side of the equation.
By (P1) ${\bf D}_{1:N}$ is invertible, which implies that
$\bar{\m{D}}$ is invertible with
$\bar{\m{D}}^{-1} = {\bf D}_{1:N}^{-1}\otimes {\bf I}_n$.
Moreover,
$\|\bar{{\bf D}}^{-1}\|_\infty =$ $\|{\bf D}_{1:N}^{-1}\|_\infty \le 2$ by (P1).
By (A2) $\|{\m{A}}\|_\infty \le \beta$ and
$\|\bar{\m{D}}^{-1}{\m{A}}\|_\infty \le$
$\|\bar{\m{D}}^{-1}\|_\infty \|{\m{A}}\|_\infty \le 2\beta < 1$ since
$\beta < 1/2$.
By \cite[p. 351]{HJ12},
${\bf I}-\bar{\bf D}^{-1}{\bf A}$ is invertible and
\begin{equation}\label{Xbound}
\|({\bf I}-\bar{\bf D}^{-1}{\bf A})^{-1}\|_\infty \leq 1/(1-2\beta).
\end{equation}
%
%Consequently, $\bar{\m{D}} - {\m{A}} =$
%$\bar{\m{D}}(\m{I} - \bar{\m{D}}^{-1}{\m{A}})$ is invertible.
Multiply (\ref{statedynamics}) first by
$\bar{\bf D}^{-1}$ and then by
$(\m{I} - \bar{\m{D}}^{-1}{\m{A}})^{-1}$ to obtain
\[
\bar{\m{X}} =
({\bf I}-\bar{\bf D}^{-1}{\bf A})^{-1}
\left( \bar{\m{D}}^{-1}\m{p} +
\bar{\m{D}}^{-1} (\m{D}_0\otimes \m{I}_n) \m{q}_0 \right) .
\]
We take the norm of $\bar{\m{X}}$ and utilize (\ref{Xbound})
to find that
\begin{equation}\label{h98}
\|\bar{\m{X}}\|_\infty \le
\left( \frac{1}{1-2\beta} \right)
\left[ \|\bar{\m{D}}^{-1}\m{p}\|_\infty +
\|\bar{\m{D}}^{-1} (\m{D}_0\otimes \m{I}_n) \m{q}_0 \|_\infty \right] .
\end{equation}

Since the polynomial that is identically equal to $\m{1}$
has derivative $\m{0}$ and since $\m{D}$ is a differentiation matrix,
we have $\m{D1} = \m{0}$,
which implies that $\m{D}_{1:N}\m{1} = -{\bf D}_{0}$.
Hence, $\m{D}_{1:N}^{-1} \m{D}_0 = -\m{1}$.
It follows that
\[
(\bar{\m{D}})^{-1}
[\m{D}_{0} \otimes \m{I}_n] =
[({\bf D}_{1:N})^{-1}\otimes {\bf I}_n]
[\m{D}_{0} \otimes \m{I}_n] =
-\m{1}\otimes \m{I}_n .
\]
We make this substitution in (\ref{h98}) and use the bound
for the sup-norm in terms of the Euclidean norm to obtain
\[
\|\bar{\m{X}}\|_\infty \le
%\left( \frac{1}{1-2\beta} \right)
%\left( \|\bar{\m{D}}^{-1}\m{p}\|_\infty + \|\m{q}_0\|_\infty \right) \le
\left( \frac{1}{1-2\beta} \right)
\left( \|\bar{\m{D}}^{-1}\m{p}\|_\infty + |\m{q}_0| \right) .
\]

Observe that
\[
\bar{\m{D}}^{-1}\m{p} = 
\left( {\bf D}_{1:N}^{-1}\otimes {\bf I}_n \right) \m{p} =
\left( {\bf D}_{1:N}^{-1} \m{W}^{-1/2} \otimes {\bf I}_n \right)
\left[ \left( \m{W}^{1/2} \otimes \m{I}_n \right) \m{p} \right] ,
\]
where $\m{W}$ is the diagonal matrix with the quadrature weights on
the diagonal.
Based on this identity, an element of $\bar{\m{D}}^{-1}\m{p}$
is the dot product between
\medskip

\begin{center}
a row of $\left( {\bf D}_{1:N}^{-1} \m{W}^{-1/2} \otimes {\bf I}_n \right)$
and the column vector $\left( \m{W}^{1/2} \otimes \m{I}_n \right) \m{p}$.
\end{center}
\medskip
By the Schwarz inequality, this dot product is bounded by the product between
largest Euclidean length of the rows of the matrix and the Euclidean length
of the vector.
By (P2), the Euclidean lengths of the rows of
$[\m{W}^{1/2}{\bf D}_{1:N}]^{-1}$ are bounded by $\sqrt{2}$,
and by the definition of the $\omega$-norm, we have
$|\left( \m{W}^{1/2} \otimes \m{I}_n \right) \m{p}| = \|\m{p}\|_\omega$.
Hence, we have
\begin{equation}\label{h101}
\|\bar{\m{D}}^{-1}\m{p}\|_\infty \le
\sqrt{2} \|\m{p}\|_\omega \quad \mbox{and} \quad
\|\bar{\m{X}}\|_\infty \le \left( \frac{1}{1-2\beta} \right)
\left( \sqrt{2} \|\m{p}\|_\omega + |\m{q}_0| \right) .
\end{equation}

By the first equation in (\ref{h100}),
\[
\|\m{X}_{N+1}\|_\infty \le
\|\m{q}_0\|_\infty +\|\m{q}_1\|_\infty +
\sum_{j=1}^N \omega_j \|\m{A}_j\|_\infty \|\m{X}_j\|_\infty .
\]
Since the $\omega_j$ sum to 2, $\|\m{A}_j\| \le \beta < 1/2$, and the sup-norm
is bounded by the Euclidean norm, it follows that
\begin{equation}\label{h102}
\|\m{X}_{N+1}\|_\infty \le
|\m{q}_0| +|\m{q}_1| + \|\bar{\m{X}}\|_\infty.
\end{equation}
Combine (\ref{h101}) and (\ref{h102}) to obtain (\ref{xbound}).
\end{proof}
\smallskip

Next, let us consider the linearized costate dynamics.
\smallskip

\begin{lemma}
\label{cofeasible}
If {\rm (P1)}, {\rm (P2)}, and {\rm (A2)} hold,
then for each $\m{q}_0$ and $\m{q}_1 \in \mathbb{R}^n$ and
$\m{p} \in \mathbb{R}^{nN}$ with $\m{p}_i \in \mathbb{R}^n$,
$1 \le i \le N$, the linear system
\begin{eqnarray}
\left( \sum_{j=1}^{N+1}{D}_{ij}^\dagger
\g{\Lambda}_j \right) + \m{A}_i\tr \g{\Lambda}_i  &=& \m{p}_{i}
\quad 1\leq i\leq N , \label{h199} \\
\g{\Lambda}_{N+1} - \g{\Lambda}_0 +\sum_{j=1}^N\omega_j
\m{A}_j\tr \g{\Lambda}_j &=&{\m q}_0,
\quad \g{\Lambda}_{N+1} = \m{q}_1,
\label{h200}
\end{eqnarray}
has a unique solution $\g{\Lambda} \in \mathbb{R}^{n(N+2)}$.
Moreover, there exists a constant $c$, independent of $N$, such that
\begin{equation}\label{lbound}
\|\g{\Lambda}\|_\infty \le c (|\m{q}_0| + |\m{q}_1| + \|\m{p}\|_\omega)
\end{equation}
\end{lemma}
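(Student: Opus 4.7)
The plan is to mirror the proof of Lemma~\ref{feasible}, with the adjoint differentiation matrix $\m{D}^\dagger_{1:N}$ playing the role of $\m{D}_{1:N}$ and with the terminal data $\m{q}_1$ at $\tau_{N+1}$ playing the role of the initial data $\m{q}_0$ at $\tau_0$. Letting $\bar{\g{\Lambda}} = (\g{\Lambda}_1,\ldots,\g{\Lambda}_N)$, $\bar{\m{E}}^\dagger = \m{D}^\dagger_{1:N}\otimes \m{I}_n$, and $\bar{\m{A}}\tr$ the block-diagonal matrix whose $i$-th block is $\m{A}_i\tr$, I would substitute $\g{\Lambda}_{N+1} = \m{q}_1$ into (\ref{h199}) and split off the $j = N+1$ term to obtain
\[
(\bar{\m{E}}^\dagger + \bar{\m{A}}\tr)\bar{\g{\Lambda}} = \bar{\m{p}} - (\m{D}^\dagger_{N+1}\otimes \m{I}_n)\m{q}_1.
\]
Because the constant polynomial has zero derivative and $\m{D}^\dagger$ is a differentiation matrix for interpolation at $\tau_1,\ldots,\tau_{N+1}$, we have $\m{D}^\dagger\m{1} = \m{0}$, which by the second identity in (\ref{Ddagger}) gives $(\m{D}^\dagger_{1:N})^{-1}\m{D}^\dagger_{N+1} = -\m{1}$, exactly paralleling the role of $\m{D}_{1:N}^{-1}\m{D}_0 = -\m{1}$ in Lemma~\ref{feasible}.

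The central technical obstacle is to establish analogs of (P1) and (P2) for $\m{D}^\dagger_{1:N}$: namely $\|(\m{D}^\dagger_{1:N})^{-1}\|_\infty \le 2$, and the rows of $[\m{W}^{1/2}\m{D}^\dagger_{1:N}]^{-1}$ having Euclidean norm at most $\sqrt{2}$. I would approach this via a symmetry argument. The first relation in (\ref{Ddagger}) is equivalent to the matrix identity $\m{D}^\dagger_{1:N} = -\m{W}^{-1}\m{D}_{1:N}\tr\m{W}$, and a direct Lagrange-basis calculation using the symmetry $\tau_i = -\tau_{N+1-i}$ of the Gauss nodes about $\tau = 0$ yields the equivalent form $\m{D}^\dagger_{1:N} = -\m{J}\m{D}_{1:N}\m{J}$, where $\m{J}$ is the $N\times N$ reversal permutation. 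Since the Gauss weights also satisfy $\omega_i = \omega_{N+1-i}$, $\m{J}$ commutes with $\m{W}^{1/2}$, and hence
\[
(\m{D}^\dagger_{1:N})^{-1} = -\m{J}\,\m{D}_{1:N}^{-1}\m{J}, \qquad [\m{W}^{1/2}\m{D}^\dagger_{1:N}]^{-1} = -\m{J}\,[\m{W}^{1/2}\m{D}_{1:N}]^{-1}\m{J}.
\]
Permutations preserve both the matrix sup-norm and the Euclidean norms of rows, so the bounds (P1) and (P2) transfer to the claimed bounds for $\m{D}^\dagger_{1:N}$.

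With these analogs in hand, the argument mimics Lemma~\ref{feasible}. Multiplying the block-form equation by $(\bar{\m{E}}^\dagger)^{-1}$ converts it to $(\m{I} + (\bar{\m{E}}^\dagger)^{-1}\bar{\m{A}}\tr)\bar{\g{\Lambda}} = (\bar{\m{E}}^\dagger)^{-1}\bar{\m{p}} + (\m{1}\otimes\m{I}_n)\m{q}_1$; (A2) together with the (P1)-analog gives $\|(\bar{\m{E}}^\dagger)^{-1}\bar{\m{A}}\tr\|_\infty \le 2\beta < 1$, whence $\m{I} + (\bar{\m{E}}^\dagger)^{-1}\bar{\m{A}}\tr$ is invertible with inverse of sup-norm at most $1/(1-2\beta)$, yielding existence and uniqueness of $\bar{\g{\Lambda}}$. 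The factorization
\[
(\bar{\m{E}}^\dagger)^{-1}\bar{\m{p}} = \bigl((\m{D}^\dagger_{1:N})^{-1}\m{W}^{-1/2}\otimes\m{I}_n\bigr)\bigl((\m{W}^{1/2}\otimes\m{I}_n)\bar{\m{p}}\bigr),
\]
combined with the Schwarz inequality and the (P2)-analog, gives $\|(\bar{\m{E}}^\dagger)^{-1}\bar{\m{p}}\|_\infty \le \sqrt{2}\,\|\m{p}\|_\omega$, so $\|\bar{\g{\Lambda}}\|_\infty \le c(\|\m{p}\|_\omega + |\m{q}_1|)$. Finally, (\ref{h200}) yields $\g{\Lambda}_0 = \m{q}_1 - \m{q}_0 + \sum_{j=1}^N \omega_j\m{A}_j\tr\g{\Lambda}_j$, and since $\sum_j\omega_j = 2$ and $\|\m{A}_j\tr\|_\infty \le \beta$ by (A2), $|\g{\Lambda}_0| \le |\m{q}_0| + |\m{q}_1| + 2\beta\|\bar{\g{\Lambda}}\|_\infty$, which together with the bound on $\bar{\g{\Lambda}}$ establishes (\ref{lbound}).
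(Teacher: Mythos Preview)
Your proposal is correct and follows the same route as the paper: the paper's proof simply states that $\m{D}^\dagger$ is a differentiation matrix on the reflected node set $\tau_1,\ldots,\tau_{N+1}$ and that the argument of Lemma~\ref{feasible} goes through verbatim with $\g{\Lambda}_{N+1}$ playing the role of $\m{X}_0$ and $\g{\Lambda}_0$ the role of $\m{X}_{N+1}$. Your reflection identity $\m{D}^\dagger_{1:N}=-\m{J}\m{D}_{1:N}\m{J}$ (together with $\m{J}\m{W}^{1/2}=\m{W}^{1/2}\m{J}$) makes explicit why (P1)--(P2) transfer to $\m{D}^\dagger_{1:N}$, a point the paper leaves implicit.
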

\smallskip

\begin{proof}
As noted in (\ref{dif}), $\m{D}^\dagger$ is a differentiation matrix,
analogous to $\m{D}$, except that $\m{D}^\dagger$ operates on function
values at $\tau_1, \ldots , \tau_{N+1}$, while $\m{D}$ operates on
function values at $\tau_0, \ldots, \tau_N$.
The proof is identical to that of Lemma~\ref{feasible} except that
$\g{\Lambda}_{N+1}$ plays the role of $\m{X}_0$, while
$\g{\Lambda}_0$ plays the role of $\m{X}_{N+1}$.
\end{proof}
\smallskip

%--------------------------------------------------------------------------
\section{Invertibility of $\C{F} - \nabla \C{T}^*$ and Lipschitz continuity
of the inverse}
\label{Lip}
%--------------------------------------------------------------------------
The invertibility of $\C{F} - \nabla \C{T}^*$ is now established.
\smallskip

\begin{proposition}\label{invertible}
If {\rm (A1)}--{\rm (A2)} and {\rm (P1)}--{\rm (P2)} hold, then
for each $\m{Y} \in \C{Y}$, there is a unique solution
$(\m{X}, \m{U}, \g{\Lambda})$ to $(\ref{linearproblem})$.
\end{proposition}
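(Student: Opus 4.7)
The plan is to identify the inclusion (\ref{linearproblem}) with the first-order KKT system of a strictly convex quadratic program (QP) over discrete states $\m{X}$ and controls $\m{U} \in \C{U}^N$, then invoke standard convex analysis. Inspecting $\nabla \C{T}^*$, the components $\C{T}_0, \C{T}_1, \C{T}_2$ encode the linear discrete state dynamics, the components $\C{T}_3, \C{T}_4, \C{T}_5$ encode the adjoint dynamics for $\g{\Lambda}$ (with $\m{X}, \m{U}$ as source data), and $\C{T}_6$ is the Pontryagin-type inclusion for $\m{U}$.

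First, for any $\m{U}$ and any right-hand-side data $\m{Y}_0, \m{Y}_1, \m{Y}_2$, Lemma~\ref{feasible} produces a unique $\m{X} = \m{X}(\m{U})$ depending affinely on $\m{U}$. Substituting into the $\C{T}_6$-component reduces (\ref{linearproblem}) to a variational inequality over $\m{U} \in \C{U}^N$ alone, which is precisely the first-order optimality condition for minimizing over $\C{U}^N$ the reduced quadratic cost derived from the objective
\[
\tfrac12 \m{X}_{N+1}\tr \m{T} \m{X}_{N+1} + \tfrac12 \sum_{i=1}^N \omega_i \bigl( \m{X}_i\tr \m{Q}_i \m{X}_i + 2\m{X}_i\tr \m{S}_i \m{U}_i + \m{U}_i\tr \m{R}_i \m{U}_i \bigr) + \langle \ell, (\m{X}, \m{U}) \rangle,
\]
where $\ell$ collects the linear perturbation terms coming from $\m{Y}$. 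By (A1), this quadratic form dominates $\alpha \bigl( |\m{X}_{N+1}|^2 + \sum_i \omega_i(|\m{X}_i|^2 + |\m{U}_i|^2)\bigr)$, so after substitution of $\m{X}(\m{U})$ it remains strongly convex in $\m{U}$, uniformly in $N$. Since $\C{U}$ is closed and convex with nonempty interior, $\C{U}^N$ is closed, convex, and nonempty, so the reduced QP admits a unique minimizer $\m{U}$. Setting $\m{X} = \m{X}(\m{U})$ and applying Lemma~\ref{cofeasible} to the linear adjoint system encoded by $\C{T}_3, \C{T}_4, \C{T}_5$ then produces a unique $\g{\Lambda}$.

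The main point requiring verification is that the KKT conditions of the reduced QP coincide with the $\C{T}_6$-component of (\ref{linearproblem}), and that the Lagrange multipliers for the dynamics constraints in the QP can be identified with the $\g{\Lambda}$ produced by Lemma~\ref{cofeasible}. This is the discrete analogue of the computation carried out in Proposition~\ref{equiv}, and relies on the relation (\ref{Ddagger}) between $\m{D}$ and $\m{D}^\dag$ through the quadrature weights; this allows the adjoint block of KKT stationarity (with respect to $\m{X}$) to be rewritten in the $\m{D}^\dag$ form appearing in $\C{T}_4$, while the boundary multipliers produce $\C{T}_3$ and $\C{T}_5$. Uniqueness of the full triple $(\m{X}, \m{U}, \g{\Lambda})$ then propagates backward through each stage of the elimination.
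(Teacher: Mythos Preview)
Your proposal is correct and follows essentially the same approach as the paper: both identify (\ref{linearproblem}) with the KKT system of a strongly convex quadratic program in $(\m{X},\m{U})$ subject to the linear discrete dynamics, invoke (A1) for strong convexity, Lemma~\ref{feasible} for feasibility and uniqueness of $\m{X}$, Lemma~\ref{cofeasible} for uniqueness of $\g{\Lambda}$, and the change of variables from Proposition~\ref{equiv} together with (\ref{Ddagger}) to match the adjoint KKT block with $\C{T}_3,\C{T}_4,\C{T}_5$. The only cosmetic difference is that you eliminate $\m{X}$ first and work with a reduced problem in $\m{U}$, whereas the paper minimizes jointly over $(\m{X},\m{U})$ with the dynamics as explicit equality constraints; note that your reduction to a variational inequality in $\m{U}$ alone also requires substituting $\g{\Lambda}(\m{U})$ (obtained from the adjoint block) into the $\C{T}_6$-component, a step you mention only at the end.
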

\smallskip
\begin{proof}
As in our earlier work
\cite{HagerDontchevPooreYang95,DontchevHager93,DontchevHager98a,
DontchevHagerVeliov00,Hager90,HagerHouRao16a,HagerHouRao15c,HagerHouRao16c},
we formulate a strongly convex quadratic programming
problem whose first-order optimality conditions reduce to
(\ref{linearproblem}).
Let us consider the problem
\begin{equation}\label{QP}
\begin{array}{cl}
\mbox {minimize} &\frac{1}{2} \mathcal{Q}({\bf X},{\bf U})
+ \C{L}(\m{X}, \m{U}, \m{Y}) \\[.08in]
\mbox {subject to} &\sum_{j=1}^N{D}_{ij}{\bf X}_j
={\bf A}_i{\bf X}_i+ {\bf B}_i{\bf U}_i-{\bf y}_{1i}, \quad
\m{U}_i \in \C{U}, \quad 1\leq i \leq N, \\
&\m{X}_0 = -\m{y}_0,
\quad {\bf X}_{N+1}= \m{X}_0 - \m{y}_2 + \sum_{j=1}^N\omega_j
\left({\bf A}_j{\bf X}_j+{\bf B}_j{\bf U}_j
\right) .
%&\m{X}_0 = \m{y}_0, \quad \m{U}_i \in U \quad 1\leq i \leq N.
\end{array}
\end{equation}
Here the quadratic and linear terms in the objective are
\begin{eqnarray*}
\mathcal{Q}({\bf X},{\bf U})&=&
{\bf X}_{N+1}\tr{\bf T}{\bf X}_{N+1} +
\sum_{i=1}^N\omega_i
\left({\bf X}_i\tr{\bf Q}_i{\bf X}_i
+2{\bf X}_i\tr{\bf S}_i{\bf U}_i
+{\bf U}_i\tr{\bf R}_i{\bf U}_i\right), \\
\C{L}(\m{X}, \m{U}, \m{Y}) &=&
\m{X}_0\tr \left(\m{y}_3 - \sum_{i=1}^N \omega_i \m{y}_{4i} \right)
- \m{y}_5\tr \m{X}_{N+1} + \sum_{i=1}^N\omega_i
\left( \m{y}_{4i}\tr \m{X}_i - \m{y}_{6i}\tr \m{U}_i \right) .
\end{eqnarray*}
In (\ref{QP}), the minimization is over $\m{X}$ and $\m{U}$, while
$\m{Y}$ is a fixed parameter.
By Lemma~\ref{feasible}, the quadratic program (\ref{QP}) is feasible
for any choice of $\m{y}_0$, $\m{y}_1$, and $\m{y}_2$.
Since $\m{X}_0 = -\m{y}_0$, $\m{X}_0$ can be eliminated from the
quadratic program (\ref{QP}).
By (A1), the quadratic program is strongly convex with respect to
$\m{X}_1$, $\ldots$, $\m{X}_{N+1}$, and $\m{U}_1$, $\ldots$, $\m{U}_N$.
Hence, there exists a unique state and control solving (\ref{QP}).
Next, we will show that the first-order optimality conditions for (\ref{QP})
reduce to (\ref{linearproblem}).
These conditions hold since $\C{U}$ has nonempty interior and the
state dynamics have full row rank by Lemma~\ref{feasible}.
Due to the convexity of the objective and constraints,
the first-order optimality conditions are both necessary and sufficient
for optimality.
Uniqueness of $\m{X}$ and $\m{U}$ is due to (A1) and the strong
convexity of (\ref{QP}).
Uniqueness of $\g{\Lambda}$ is by Lemma~\ref{cofeasible}.

Now let us show that (\ref{linearproblem}) corresponds to the
optimality conditions for (\ref{QP}).
Components 0, 1, and 2 of (\ref{linearproblem})
are simply the constraints of (\ref{QP}).
The remaining optimality conditions are associated with the
Lagrangian $L$ given by
\begin{eqnarray*}
&L(\g{\mu}, \m{X}, \m{U}) =
{\textstyle\frac{1}{2}}\C{Q}(\m{X}, \m{U}) + \C{L}(\m{X}, \m{U}, \m{Y}) 
+ \sum_{i=1}^N\left\langle\g{\mu}_{i},
{\bf A}_{i}{\bf X}_{i}+ {\bf B}_{i}{\bf U}_{i} - \m{y}_{1i}
-\sum_{j=0}^{N}{D}_{ij}{\bf X}_{j} \right\rangle & \\
&
- \langle \g{\mu}_0 , \m{X}_0 + \m{y}_1 \rangle
+ \left\langle \g{\mu}_{N+1}, 
\m{X}_0 - \m{X}_{N+1} - \m{y}_2 +
\sum_{j=1}^N\omega_j \left({\bf A}_j{\bf X}_j+{\bf B}_j{\bf U}_j \right)
\right\rangle . &
\end{eqnarray*}
The negative derivative of the Lagrangian with respect to $\m{U}_{i}$ is
\[
\omega_i \left( \m{y}_{6i} - \m{S}_i\tr \m{X}_i - \m{R}_i\m{U}_i -
\m{B}_i\tr \g{\mu}_{N+1} \right) - \m{B}_i\tr \g{\mu}_i .
\]
Substitute $\g{\mu}_{N+1} = \g{\Lambda}_{N+1}$ and
$\g{\mu}_i = \omega_i(\g{\Lambda}_i - \g{\Lambda}_{N+1})$,
$1 \le i \le N$.
The requirement that the resulting vector lies in $N_{\C{U}}(\m{U}_i)$
is the 6-th component of (\ref{linearproblem}).
Equate to zero the derivative of the Lagrangian with respect to $\m{X}_{N+1}$
to obtain
\[
\m{0} = \m{TX}_{N+1} - \m{y}_5 - \g{\mu}_{N+1} = 
\m{TX}_{N+1} - \m{y}_5 - \g{\Lambda}_{N+1}.
\]
This is the 5th component of (\ref{linearproblem}).
The derivative of the Lagrangian with respect to $\m{X}_j$,
$1 \le j \le N$,  gives the relation
\[
\sum_{i=1}^N D_{ij} \g{\mu}_i = \m{A}_j\tr (\g{\mu}_j + \omega_j\g{\mu}_{N+1})
+ \omega_j (\m{Q}_j \m{X}_j + \m{S}_j \m{U}_j + \m{y}_{4j}) .
\]
Change variables from $\g{\mu}$ to $\g{\Lambda}$ and substitute
for $D_{ij}$ using (\ref{Ddagger}) to obtain the 4th component of
(\ref{linearproblem}).
Finally, differentiate the Lagrangian with respect to $\m{X}_0$ to obtain
\[
\g{\mu}_{N+1} - \g{\mu}_0 + \m{y}_3 - \sum_{i=1}^N \omega_i \m{y}_{4i}
- \sum_{i=1}^N D_{i0} \g{\mu}_i = \m{0}.
\]
Substitute for the $D_{i0}$ sum using both (\ref{mu0expand}) and
the 4th component of (\ref{linearproblem}) to obtain the 3rd component
of (\ref{linearproblem}).
\end{proof}

We now wish to bound the change in the solution of (\ref{QP})
in terms of the change in $\m{Y}$.
Let $\g{\chi}(\m{Y})$ denote the solution of the state dynamics
(\ref{h99})--(\ref{h100}) associated with $\m{p} = \m{y}_1$,
$\m{q}_0 = \m{y}_0$, and $\m{q}_1 = \m{y}_2$.
In (\ref{QP}) we make the change of variables
$\m{X} = \m{Z} + \g{\chi}(\m{Y})$.
The dynamics of (\ref{QP}) become
\begin{equation}\label{zdynamics}
\sum_{j=1}^N{D}_{ij}{\bf Z}_j
={\bf A}_i{\bf Z}_i+ {\bf B}_i{\bf U}_i, \quad
\m{Z}_0 = \m{0}, \quad
{\bf Z}_{N+1} = \sum_{j=1}^N\omega_j
\left({\bf A}_j{\bf Z}_j+{\bf B}_j{\bf U}_j
\right) .
\end{equation}
Hence, the effect of the variable change is to remove $\m{Y}$ from the
constraints.
After the change of variables, the linear term in the 
objective of (\ref{QP}) reduces to
\begin{eqnarray*}
\hat{\C{L}}(\m{Z}, \m{U}, \m{Y}) &=& \m{y}_5\tr \m{Z}_{N+1}
- \sum_{i=1}^N\omega_i
\left( \m{y}_{4i}\tr \m{Z}_i + \m{y}_{6i}\tr \m{U}_i \right) \\
&& \quad + \m{Z}_{N+1}\tr \m{T} \g{\chi}_{N+1}(\m{Y}) +
\sum_{i=1}^N \omega_i \left[ \m{Z}_i\tr \m{Q}_i \g{\chi}_i(\m{Y})
+ \m{U}_i\tr \m{S}_i\tr \g{\chi}_i(\m{Y}) \right] ,
\end{eqnarray*}
since $\m{Z}_0 = \m{0}$.
Let $(\m{Z}^j, \m{U}^j)$ denote the solution of (\ref{QP})
corresponding to $\m{Y}^j \in \C{Y}$, $j = 1$ and 2.
By \cite[Lem.~4]{DontchevHager93}, the solution change satisfies
the relation
\begin{equation}\label{lemma4}
\C{Q}(\Delta\m{Z}, \Delta\m{U}) \le
|\hat{\C{L}}(\Delta\m{Z},\Delta\m{U}, \Delta \m{Y})|
\end{equation}
where $\Delta\m{Z} = \m{Z}^1 - \m{Z}^2$, $\Delta\m{U} = \m{U}^1 - \m{U}^2$,
and $\Delta \m{Y} = \m{Y}^1 - \m{Y}^2$.

By (A1) we have the lower bound
\begin{equation}\label{lower}
\C{Q}(\Delta\m{Z}, \Delta\m{U}) \ge
\alpha (|\Delta \m{Z}_{N+1}|^2 +
\|\Delta \bar{\m{Z}}\|_\omega^2
+ \|\Delta \m{U}\|_\omega^2),
\end{equation}
where $\Delta \bar{\m{Z}}$ is the subvector of $\Delta \m{Z}$
corresponding to components 1 through $N$.
The Schwarz inequality applied to the linear terms in (\ref{lemma4}) yields
the upper bound
\begin{eqnarray*}
&\left|\hat{\C{L}}(\Delta\m{Z},\Delta\m{U}, \Delta\m{Y})\right|
\le& \\[.08in]
& c \bigg(
|\Delta \m{Z}_{N+1}|
+ \|\Delta \bar{\m{Z}}\|_\omega
+ \|\Delta \m{U}\|_\omega \bigg)
\bigg( \|\Delta \m{Y}\|_\C{Y} + \|\bar{\g{\chi}}(\Delta\m{Y})\|_\omega
+ |\g{\chi}_{N+1}(\Delta\m{Y})|\bigg).&
\end{eqnarray*}
By (\ref{normbound})
$\|\bar{\g{\chi}}(\Delta\m{Y})\|_\omega \le
\sqrt{2n} \|\bar{\g{\chi}}(\Delta\m{Y})\|_\infty$, and by Lemma~\ref{feasible},
$\|\g{\chi}(\Delta\m{Y})\|_\infty \le c \|\Delta\m{Y}\|_\C{Y}$.
Hence, the upper bound simplifies to
\begin{equation} \label{upper}
\left|\hat{\C{L}}(\Delta\m{Z},\Delta\m{U}, \Delta \m{Y}) \right| \le
c \|\Delta \m{Y}\|_\C{Y}
\bigg( |\Delta \m{Z}_{N+1}|)
+ \|\Delta \bar{\m{Z}}\|_\omega
+ \|\Delta \m{U}\|_\omega \bigg) . 
\end{equation}
Combine (\ref{lemma4})--(\ref{upper}) to obtain the Lipschitz result
\begin{equation}\label{step1}
|\Delta \m{Z}_{N+1}|
+ \|\Delta \bar{\m{Z}}\|_\omega
+ \|\Delta \m{U}\|_\omega  \le
c \|\Delta \m{Y}\|_\C{Y}.
\end{equation}

By (\ref{zdynamics}), we see that $\Delta \m{Z}$ is the solution
of (\ref{h99})--(\ref{h100}) corresponding to
\[
\m{q}_0 = \m{0}, \quad
\m{p}_i = \m{B}_i\Delta\m{U}_i , \quad
\m{q}_1 = \sum_{j=1}^N \omega_j \m{B}_j \Delta \m{U}_j.
\]
By (\ref{step1}), it follows that
\begin{equation}\label{b1}
\|\m{B}\Delta\m{U}\|_\omega \le c \|\Delta \m{U}\|_\omega \le
c \|\Delta \m{Y}\|_\C{Y},
\end{equation}
where $\m{B}$ is the block diagonal matrix with $i$-th diagonal block
$\m{B}_i$.
Moreover, by the Schwarz inequality and (\ref{b1}), we have
\begin{equation} \label{b2}
\left| \sum_{j=1}^N \omega_j \m{B}_j \Delta \m{U}_j \right| \le
\left( \sum_{j=1}^N \omega_j \right)^{1/2} \left[
\sum_{j=1}^N \omega_j |\m{B}_j\Delta \m{U}_j|^2 \right]^{1/2}
\le c \|\Delta \m{Y}\|_\C{Y}.
\end{equation}
Hence, this choice for $\m{q}_0$, $\m{q}_1$, and $\m{p}$ together with
Lemma~\ref{feasible} and the bounds (\ref{b1}) and (\ref{b2}) imply that
$\|\Delta \m{Z}\|_\infty \le c \|\Delta \m{Y}\|_\C{Y}$.
Since $\Delta \m{X} = \Delta \m{Z} + \g{\chi}(\Delta \m{Y})$ where
$\|\g{\chi}(\Delta\m{Y})\|_\infty \le c \|\Delta\m{Y}\|_\C{Y}$
by Lemma~\ref{feasible}, we conclude that
\begin{equation}\label{bound1}
\|\Delta \m{X}\|_\infty \le c \|\Delta \m{Y}\|_\C{Y}.
\end{equation}

Now consider the costate dynamics (\ref{h199})--(\ref{h200}) with
\begin{eqnarray*}
\m{q}_0 &=&
-\left( \Delta \m{y}_3 +
\sum_{j=1}^N \omega_j [\m{Q}_j\Delta \m{X}_j + \m{S}_j \Delta \m{U}_j]
\right), \\
\m{p}_i &=& -\left( \Delta \m{y}_{4i}
+ \m{Q}_i \Delta \m{X}_i + \m{S}_i \Delta \m{U}_i
\right), \\
\m{q}_1 &=& -\Delta \m{y}_5 + \m{T}\Delta \m{X}_{N+1} .
\end{eqnarray*}
By (\ref{normbound}) and (\ref{bound1}), we have
\begin{equation}\label{b3}
\|\m{Q}\Delta\bar{\m{X}}\|_\omega \le c \|\Delta \bar{\m{X}}\|_\omega \le
c \|\Delta \bar{\m{X}}\|_\infty \le c \|\Delta \m{Y}\|_\C{Y},
\end{equation}
where $\m{Q}$ is the block diagonal matrix
with $i$-th diagonal block $\m{Q}_i$.
The $\m{S}_i \Delta \m{U}_i$ term associated with $\m{p}_i$ can be analyzed
as in (\ref{b1}) and the $\m{S}_j \Delta \m{U}_j$ terms in $\m{q}_0$ can
be analyzed as in (\ref{b2}).
Analogous to the state dynamics, it follows from Lemma~\ref{cofeasible} that
\begin{equation}\label{bound2}
\|\Delta \g{\Lambda}\|_\infty \le c \|\Delta \m{Y}\|_\C{Y} .
\end{equation}

Let $[\m{X}(\m{Y}), \m{U}(\m{Y}), \g{\Lambda}(\m{Y})]$ denote the
solution of (\ref{linearproblem}) for given $\m{Y} \in \C{Y}$.
From the last component of the inclusion (\ref{linearproblem}) and
for any $i$ between 1 and $N$, we have
\[
\left[ \m{y}_6 - \m{S}_i\tr\m{X}_i(\m{Y}) - \m{R}_i \m{U}_i(\m{Y})
- \m{B}_i\tr \g{\Lambda}_i(\m{Y}) \right]\tr
(\m{V}-\m{U}_i(\m{Y})) \le 0 \quad \mbox{for all } \m{V} \in \C{U} .
\]
We add the inequality corresponding to
$\m{Y} = \m{Y}^1$ and $\m{V} = \m{U}_i(\m{Y}^2)$ to the inequality
corresponding to
$\m{Y} = \m{Y}^2$ and $\m{V} = \m{U}_i(\m{Y}^1)$ to obtain the inequality
\[
\Delta \m{U}_i\tr\m{R}_i \Delta \m{U}_i \le
\left[ -\Delta \m{y}_6 + \m{S}\tr \Delta \m{X}_i
+ \m{B}_i\tr \Delta \g{\Lambda}_i \right]\tr \Delta \m{U}_i .
\]
By (A1) and the Schwarz inequality, it follows that
\[
\|\Delta \m{U}_i\|_\infty \le |\Delta \m{U}_i| \le
c (|\Delta \m{y}_6| + | \Delta \m{X}_i| + |\Delta \g{\Lambda}_i|) .
\]
We utilize the previously established bounds (\ref{bound1}) and (\ref{bound2})
to obtain $\|\Delta \m{U}\|_\infty \le \|\Delta \m{Y}\|_\C{Y}$.
The following lemma summarizes these observations.
\smallskip

\begin{lemma}\label{inf-bounds}
If {\rm (A1)}--{\rm (A2)} and {\rm (P1)}--{\rm (P2)} hold,
then there exists a constant $c$, independent of $N$,
such that the change
$(\Delta \m{X}, \Delta \m{U}, \Delta \g{\Lambda})$ in the solution of
$(\ref{linearproblem})$ corresponding to a change $\Delta \m{Y}$
in $\m{Y} \in \C{Y}$ satisfies
\[
\max \left\{ \|\Delta \m{X}\|_\infty, \|\Delta \m{U}\|_\infty,
\|\Delta\g{\Lambda}\|_\infty \right\} \le c \|\Delta \m{Y}\|_\C{Y} .
\]
\end{lemma}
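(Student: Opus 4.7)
The plan is to assemble the bounds derived in the discussion preceding the lemma into a single statement. The argument naturally splits into three stages: first control $\|\Delta\m{X}\|_\infty$, then $\|\Delta\g{\Lambda}\|_\infty$, and finally $\|\Delta\m{U}\|_\infty$, with the last being the nontrivial step.

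For the state bound, I would use the QP reformulation from Proposition~\ref{invertible} and the shift $\m{X} = \m{Z} + \g{\chi}(\m{Y})$ to push the $\m{Y}$-dependence out of the constraints into the objective. Coercivity (A1) combined with the standard quadratic-programming stability estimate \cite[Lem.~4]{DontchevHager93} gives an $\omega$-norm bound of the form
\[
|\Delta\m{Z}_{N+1}| + \|\Delta\bar{\m{Z}}\|_\omega + \|\Delta\m{U}\|_\omega \le c\|\Delta\m{Y}\|_\C{Y}.
\]
Since $\Delta\m{Z}$ itself solves the linearized state dynamics with forcing $\m{B}_i\Delta\m{U}_i$, Lemma~\ref{feasible} (and the trivial estimate on the terminal sum via Cauchy--Schwarz) lifts this $\omega$-norm control bound to the sup-norm bound $\|\Delta\m{Z}\|_\infty \le c\|\Delta\m{Y}\|_\C{Y}$. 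Combining with Lemma~\ref{feasible} applied to $\g{\chi}(\Delta\m{Y})$ itself yields the state estimate (\ref{bound1}).

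The costate bound is then a direct application of Lemma~\ref{cofeasible} to the linearized costate system (\ref{h199})--(\ref{h200}), where the right-hand side is built from $\Delta\m{y}_3,\Delta\m{y}_4,\Delta\m{y}_5$ together with the forcing terms $\m{Q}_i\Delta\m{X}_i + \m{S}_i\Delta\m{U}_i$ and $\m{T}\Delta\m{X}_{N+1}$. The just-proved state bound controls the $\m{Q}_i\Delta\m{X}_i$ and $\m{T}\Delta\m{X}_{N+1}$ contributions in sup-norm, while the $\m{S}_i\Delta\m{U}_i$ contribution is handled in the $\omega$-norm by the same device used for $\m{B}_i\Delta\m{U}_i$; Lemma~\ref{cofeasible} then delivers (\ref{bound2}).

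The main obstacle is upgrading $\|\Delta\m{U}\|_\omega$ to $\|\Delta\m{U}\|_\infty$, since the QP stability argument only produces a weighted $L^2$-type estimate pointwise in the collocation index. The fix is to exploit the variational-inequality structure of the sixth component of (\ref{linearproblem}) directly, index by index. Writing the VI at $\m{Y}^1$ tested against $\m{U}_i(\m{Y}^2)$, writing the VI at $\m{Y}^2$ tested against $\m{U}_i(\m{Y}^1)$, and adding, the normal-cone contributions cancel because $N_\C{U}$ is the normal cone to a convex set, and one is left with
\[
\Delta\m{U}_i\tr\m{R}_i\Delta\m{U}_i \le \bigl(-\Delta\m{y}_{6i} + \m{S}_i\tr\Delta\m{X}_i + \m{B}_i\tr\Delta\g{\Lambda}_i\bigr)\tr \Delta\m{U}_i.
\]
The coercivity of $\m{R}_i$ from (A1) combined with Cauchy--Schwarz then bounds $|\Delta\m{U}_i|$ by the Euclidean norm of the right-hand coefficient. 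Using the already-established sup-norm bounds on $\Delta\m{X}$ and $\Delta\g{\Lambda}$, together with the trivial $|\Delta\m{y}_{6i}| \le \|\Delta\m{y}_6\|_\infty \le \|\Delta\m{Y}\|_\C{Y}$, yields the desired pointwise estimate $\|\Delta\m{U}\|_\infty \le c\|\Delta\m{Y}\|_\C{Y}$ uniformly in $i$, and the lemma follows by taking the maximum of the three bounds.
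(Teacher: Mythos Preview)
Your proposal is correct and follows essentially the same route as the paper: the QP reformulation with the shift $\m{X}=\m{Z}+\g{\chi}(\m{Y})$, the stability estimate from \cite[Lem.~4]{DontchevHager93} combined with (A1) to get the $\omega$-norm bound, the lift to sup-norm via Lemmas~\ref{feasible} and~\ref{cofeasible}, and finally the pointwise variational-inequality trick on component~6 to upgrade $\|\Delta\m{U}\|_\omega$ to $\|\Delta\m{U}\|_\infty$. The only cosmetic difference is that you correctly write $\Delta\m{y}_{6i}$ where the paper's text has the slightly loose $\Delta\m{y}_6$.
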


Theorem~\ref{maintheorem} follows from 
Lemma~\ref{inf-bounds} and Proposition~\ref{abstractProp};
the proof is a small modification of the analysis in
\cite[Thm.~2.1]{HagerHouRao16c}.
The Lipschitz constant $\mu$ of Proposition~\ref{abstractProp} is
the constant $c$ of Lemma~\ref{inf-bounds}.
Choose $\varepsilon$ small enough that $\varepsilon\mu < 1$.
When we compute the difference
$\nabla \C{T}(\m{X}, \m{U}, \g{\Lambda}) -
\nabla \C{T}(\m{X}^*, \m{U}^*, \g{\Lambda}^*)$ for
$(\m{X}, \m{U}, \g{\Lambda})$ near $(\m{X}^*, \m{U}^*, \g{\Lambda}^*)$,
the $\m{D}$ and $\m{D}^\dagger$ constant terms cancel, and we are
left with terms involving the difference of
derivatives of $\m{f}$ or $C$ up to second order at nearby points.
By the smoothness assumption,
these second derivatives are uniformly continuous on
the closure of $\C{O}$ and on a ball around $\m{x}^*(1)$.
Utilizing (\ref{normbound}), it follows that for
$r$ sufficiently small,
\[
\
\|\nabla\mathcal{T}(\m{X}, \m{U}, \g{\Lambda})-
\nabla\mathcal{T}(\m{X}^*, \m{U}^*, \g{\Lambda}^*)\|_\C{Y}
\leq \varepsilon
\]
whenever
\begin{equation}\label{r-bound}
\max\{\|\m{X} -\m{X}^*\|_\infty, \|\m{U} -\m{U}^*\|_\infty,
\|\g{\Lambda} - \g{\Lambda}^*\|_\infty\} \le r.
\end{equation}
Since the smoothness $\eta \ge 2$ in Theorem~\ref{maintheorem}, let
us choose $\eta = 2$ in Lemma~\ref{residual_lemma} and then take $\bar{N}$
large enough that
$\left\|\mathcal{T}\left(\m{X}^*, \m{U}^*, \g{\Lambda}^*\right)\right\|_\C{Y}
\leq(1-\mu\varepsilon)r/\mu$ for all $N \ge \bar{N}$.
Hence, by Proposition~\ref{abstractProp}, there exists a solution to
$\C{T}(\m{X},\m{U}, \g{\Lambda}) \in \C{F}(\m{U})$ satisfying (\ref{r-bound}).
Moreover, by (\ref{abs}) and (\ref{resbound}),
the estimate (\ref{maineq}) holds.
We can use exactly the same argument given in \cite{HagerHouRao16c} to show
that this solution to the first-order condition
$\C{T}(\m{X},\m{U}, \g{\Lambda}) \in \C{F}(\m{U})$ is a local minimizer
of (\ref{D}) or equivalently, of (\ref{nlp}).
%This completes the proof.

%============================================================================
\section{Numerical experiments}
\label{numerical}
%============================================================================
We consider the problem from \cite{Hager84b} given by
\begin{eqnarray}
&\mbox{minimize}&\quad \frac{1}{2}\int_{0}^{1}[x^2(t)+u^2(t)]\;dt\nonumber\\
&\mbox{subject to}& \quad \dot{x}(t)=u(t),\quad u(t)\leq 1, \quad t \in \Omega,
\quad x(0)=\frac{1+3e}{2(1-e)}. \label{example}
\end{eqnarray}
The optimal state and control are
\[
\begin{array}{lll}
{0\leq t\leq \frac{1}{2}}:
& x^*(t)=\displaystyle{t+\frac{1+3e}{2(1-e)},}
& u^*(t)=1,\\[.10in]
{\frac{1}{2} \leq t\leq 1}:
& x^*(t)=\displaystyle{\frac{e^t + e^{2-t}}{\sqrt{e}(1-e)},}
& \displaystyle{u^*(t)=\frac{e^t - e^{2-t}}{\sqrt{e}(1-e)} .}
\end{array}
\]
The associated costate is the integral of the state from $t$ to 1.
Since the objective of the test problem is quadratic and the constraints
are linear equalities and inequalities, the discrete problem
(\ref{nlp}) is a quadratic programming problem, which we solved
using MATLAB's routine {\sc quadprog}.
In Figure~\ref{problem1}, we plot in base 10 the logarithm of the sup-norm
error in the state, control, and costate versus the logarithm of the
degree of the polynomial in the discrete problem.
Since the optimal state has a discontinuity in its second derivative at
$t = 1/2$, $x^*$ lies in $H^2([0, 1])$ as well as in the fractional
Sobolev space $H^{2.5-\epsilon}([0, 1])$ for any $\epsilon > 0$.
Theorem~\ref{maintheorem} implies that the error is $O(N^{\epsilon-1})$.
On the other hand, the observed convergence rate in Figure~\ref{problem1}
is $O(N^{-2})$,
so the error bound given in Theorem~\ref{maintheorem} is not tight,
at least for this particular test problem.
\begin{figure}
\begin{center}
\includegraphics[width=9.0cm]{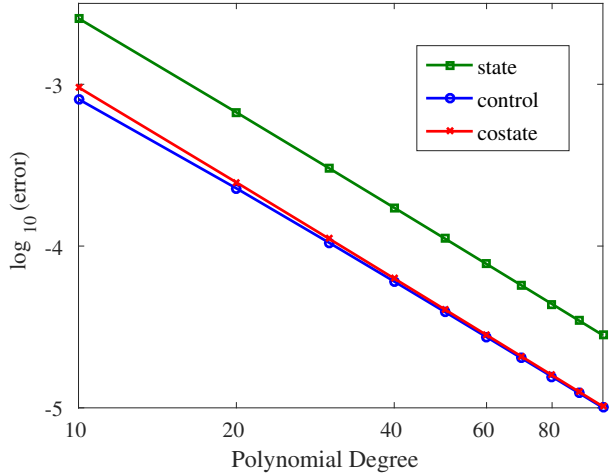}
\end{center}
\caption{Logarithm of sup-norm error in state, control, and costate
versus polynomial degree.}
\label{problem1}
\end{figure}

\section{Conclusions}
An estimate is obtained for the sup-norm error in an approximation
to a control constrained variational problem where the
state is approximated by a polynomials of degree $N$ and the dynamics
is enforced at the Gauss quadrature points.
The error is bounded by $(c/N)^{p-3/2}$ times the
$\C{H}^p$ norms of the state and costate, where $p$ is the minimum
of $N+1$ and the smoothness $\eta$; it is assumed that $\eta \ge 2$.
In \cite{HagerHouRao16c}, an unconstrained control problem was considered and
the corresponding bound was $(c/N)^{p-3}$ with $\eta \ge 4$.
The new work advances the convergence theory by
significantly improving the exponent in the convergence rate, by relaxing the
smoothness requirement, and by including control constraints.
When control constraints are present, $\eta$ is often at most 2,
so the relaxation in the smoothness condition is needed to treat control
constrained problems.
When control constraints are introduced, the first-order optimality conditions
lead to a variational inequality, and
the analysis centers on the stability of the linearized variational problem
under perturbations.
The improvements in the convergence theory were achieved by analyzing
the effect of perturbations
in an $L^2 (\Omega)$ setting rather in $L^\infty (\Omega)$,
and by analyzing interpolation errors in the
Sobolev space $\C{H}^p(\Omega)$ rather than in $L^\infty (\Omega)$.
A numerical example indicates that further tightening of the convergence
theory may be possible.

%=======================================
\section{Appendix 1: Proof of (P1)}
\label{appendix1}
%=======================================
Let $p \in \C{P}_N$ be any polynomial for which $p(-1) = 0$ and
let $\m{p}$ and $\dot{\m{p}} \in \mathbb{R}^N$ denote the vectors
with components $p(\tau_i)$ and $\dot{p}(\tau_i)$ respectively,
$1 \le i \le N$.
Since $p(-1) = 0$, the differentiation matrix $\m{D}$ satisfies
$\m{D}_{1:N} \m{p} = \dot{\m{p}}$, or equivalently,
$\m{D}_{1:N}^{-1} \dot{\m{p}} = \m{p}$.
Let $\m{r}\tr$ denote the $j$-th row of $\m{D}_{1:N}^{-1}$ for any $j$
between 1 and $N$, and let $\dot{\m{p}}$
have components $+1$ or $-1$ where the sign is chosen so that
\[
\m{r}\tr\dot{\m{p}} = \sum_{i=1}^N |r_i|.
\]
Due to the identity $\m{D}_{1:N}^{-1} \dot{\m{p}} = \m{p}$, we conclude that
\[
\sum_{i=1}^N |r_i| = p(\tau_j).
\]
Hence, (P1) holds if $|p(\tau_j)| \le 2$ whenever $p \in \C{P}_N$ is a
polynomial that satisfies $p(-1) = 0$ and $|\dot{p}(\tau_i)| \le 1$
for all $1 \le i \le N$.
We will prove the following stronger result:
\smallskip

\begin{proposition}\label{P1bound}
For any $p \in \C{P}_N$ with
$p(-1) = 0$ and $|\dot{p}(\tau_i)| \le 1$ for all $1 \le i \le N$,
we have $|p(\tau)| \le 2$ for all $\tau \in [-1, 1]$.
\end{proposition}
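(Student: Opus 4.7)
The plan is to exploit exactness of Gauss quadrature on $\dot{p}^2$ together with a Cauchy--Schwarz bound on the integral representation $p(\tau) = \int_{-1}^\tau \dot{p}(s)\,ds$, which follows from $p(-1) = 0$. The whole argument reduces to two short observations, and there is no real obstacle; the payoff is that we never need any Markov-type inverse inequality or Lebesgue constant.

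First I would observe that $\dot{p} \in \C{P}_{N-1}$, so $\dot{p}^2 \in \C{P}_{2N-2}$. Since $N$-point Gauss quadrature is exact on $\C{P}_{2N-1}$ by (\ref{exact}), and the weights $\omega_i$ are positive with $\sum_{i=1}^N \omega_i = 2$, the hypothesis $|\dot{p}(\tau_i)| \le 1$ gives
\[
\int_{-1}^{1} \dot{p}(s)^2\,ds \;=\; \sum_{i=1}^{N} \omega_i\,\dot{p}(\tau_i)^2 \;\le\; \sum_{i=1}^N \omega_i \;=\; 2 .
\]

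Next, because $p(-1) = 0$, the fundamental theorem of calculus gives $p(\tau) = \int_{-1}^{\tau} \dot{p}(s)\,ds$ for every $\tau \in [-1,1]$. I would then apply the Cauchy--Schwarz inequality to this representation, using the $\C{L}^2$ bound just derived:
\[
|p(\tau)| \;\le\; \left(\int_{-1}^{\tau} 1\,ds\right)^{1/2}\!\left(\int_{-1}^{\tau} \dot{p}(s)^2\,ds\right)^{1/2} \;\le\; \sqrt{\tau+1}\cdot\sqrt{2} \;\le\; 2,
\]
since $\tau + 1 \le 2$. This yields the claimed bound uniformly in $\tau \in [-1,1]$, which in turn implies (P1) by the reduction already given in the paragraph preceding the proposition (choose a row $\m{r}\tr$ of $\m{D}_{1:N}^{-1}$ and the sign vector $\dot{\m{p}}$ that realizes $\m{r}\tr\dot{\m{p}} = \sum_i |r_i|$, so that $\sum_i |r_i| = p(\tau_j) \le 2$).

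There is essentially no hard step: the crucial move is recognizing that the quadratic $\dot{p}^2$ has degree at most $2N-2$, which is precisely the range in which $N$-point Gauss quadrature is exact, so the sup-norm constraint on $\dot{p}$ at the nodes transfers directly to an $\C{L}^2$ bound on $\dot{p}$ on all of $\Omega$. The factor of $2$ in the conclusion then comes out cleanly as $\sqrt{2}\cdot\sqrt{2}$, with one $\sqrt{2}$ arising from $\sum \omega_i = 2$ and the other from $|\tau+1|\le 2$.
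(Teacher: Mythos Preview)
Your argument is correct and in fact cleaner than the paper's. Both proofs ultimately rest on the same two ingredients---exactness of $N$-point Gauss quadrature on $\C{P}_{2N-2}$ and a Cauchy--Schwarz step---but the paper takes a detour: it writes $\dot{p}$ in the Lagrange basis, introduces an auxiliary extremal polynomial $q(\tau)=\sum_i a_i l_i(\tau)$ with $a_i=\pm 1$ chosen (depending on $t$) so that $\sum_i\bigl|\int_{-1}^t l_i\bigr|=\int_{-1}^t q$, and then shows $\int_{-1}^1 q^2=2$ via the factorization $q^2-1=rP_N$ and orthogonality of $P_N$. That factorization step is exactly Gauss-quadrature exactness on $q^2\in\C{P}_{2N-2}$ with $q(\tau_i)^2=1$, just phrased differently. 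You bypass the extremal $q$ entirely by applying the quadrature identity directly to $\dot{p}^2$, using only $\dot{p}(\tau_i)^2\le 1$; the Cauchy--Schwarz step is then identical. Your route also extends verbatim to the Radau case of Corollary~\ref{radau}, since $N$-point Radau quadrature is exact on $\C{P}_{2N-2}\ni\dot{p}^2$ as well.
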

\smallskip

\begin{proof}
Let $l_i$, $1 \le i \le N$, be the Lagrange interpolating polynomials
defined by
\[
l_i (\tau) = \prod^{N}_{\substack{j=1\\j\neq i}}
\frac{\tau-\tau_j}{\tau_i-\tau_j}.
\]
Let $p \in \C{P}_N$ be any polynomial with
$p(-1) = 0$ and $|\dot{p}(\tau_i)| \le 1$ for all $1 \le i \le N$.
Since $\dot{p} \in \C{P}_{N-1}$, we can write
\[
\dot{p}(\tau) = \sum_{i=1}^N \dot{p}(\tau_i) l_i (\tau).
\]
Since $|\dot{p}(\tau_i)| \le 1$, it follows that
\begin{equation}\label{p1}
|p(t)| =\left| \int_{-1}^t \dot{p}(\tau) \; d\tau \right| =
\left| \sum_{i=1}^N \dot{p}(\tau_i) \int_{-1}^t l_i(\tau) \; d\tau \right|
\le \sum_{i=1}^N \left| \int_{-1}^t l_i (\tau) \; d\tau \right| .
\end{equation}

Let $q \in \C{P}_{N-1}$ be defined by
\[
q(\tau) = \sum_{i=1}^N a_i l_i (\tau) \quad \mbox{where} \quad
a_i = \left\{ \begin{array}{rl}
1 & \mbox{if } \displaystyle{\int_{-1}^t} l_i (\tau) \; d\tau > 0, \\
-1 & \mbox{otherwise} .
\end{array} \right.
\]
Hence, we have
\begin{equation}\label{p2}
\sum_{i=1}^N \left| \int_{-1}^t l_i (\tau) \; d\tau \right| =
\sum_{i=1}^N a_i \int_{-1}^t l_i (\tau) \; d\tau =
\int_{-1}^t q(\tau) \; d\tau.
\end{equation}
Since $|q(\tau_i)| = |a_i| = 1$ for each $i$, it follows that
$q^2(\tau) -1$ vanishes at $\tau = \tau_i$, $1 \le i \le N$.
Since $q^2 \in \C{P}_{2N-2}$, we have the factorization
\begin{equation}\label{q2}
q^2(\tau) - 1 =
%r(\tau) \prod_{i=1}^N (\tau- \tau_i) =
r(\tau) P_N(\tau),
\end{equation}
where $r \in \C{P}_{N-2}$ and $P_N$ is the Legendre polynomial of degree $N$.
Since $P_N$ is orthogonal to polynomials of degree at most $N-1$, the integral
of (\ref{q2}) yields the identity
\[
\int_{-1}^1 q^2(\tau) \; d\tau = 2.
\]
By the Schwarz inequality,
\[
\int_{-1}^1 |q(\tau)| \; d\tau \le
\left( \int_{-1}^1 d\tau \right)^{1/2}
\left( \int_{-1}^1 q^2(\tau) \; d \tau \right)^{1/2} = 2.
\]
Combine this with (\ref{p1}) and (\ref{p2}) to obtain
\[
|p(t)| \le
\int_{-1}^1 |q(\tau)| \; d\tau \le 2,
\]
which completes the proof.
\end{proof}

Although this paper has focused on the Gauss abscissa,
Proposition~\ref{P1bound} holds when the Gauss abscissa are
replaced by the Radau abscissa.
\smallskip

\begin{corollary}\label{radau}
If $\tau_i$, $1 \le i \le N$, are the Radau abscissa with $\tau_N = 1$,
then for any $p \in \C{P}_N$ with
$p(-1) = 0$ and $|\dot{p}(\tau_i)| \le 1$ for all $1 \le i \le N$,
we have $|p(\tau)| \le 2$ for all $\tau \in [-1, 1]$.
\end{corollary}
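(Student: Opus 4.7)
The plan is to mimic the proof of Proposition~\ref{P1bound} almost verbatim, with a single substitution at the step that exploits a quadrature-exactness property. First I would follow that proof line by line: define the Lagrange polynomials $l_i \in \C{P}_{N-1}$ interpolating at the Radau nodes, expand $\dot{p} = \sum_{i=1}^N \dot{p}(\tau_i) l_i$, integrate from $-1$ to $t$ using $p(-1)=0$ and $|\dot{p}(\tau_i)|\le 1$ to obtain
\[
|p(t)| \le \sum_{i=1}^N \left|\int_{-1}^t l_i(\tau)\, d\tau\right|,
\]
and then introduce the sign-choice polynomial $q(\tau) = \sum_{i=1}^N a_i l_i(\tau)$ with $a_i \in \{-1,+1\}$ chosen so that $a_i \int_{-1}^t l_i(\tau)\, d\tau \ge 0$. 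This construction is pointwise on $[-1,1]$ and uses no special property of the nodes; in particular $q \in \C{P}_{N-1}$ and $q(\tau_i) = a_i = \pm 1$, so $q^2(\tau_i) = 1$ for every $i$.

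The one step that must be revisited is the identity $\int_{-1}^1 q^2(\tau)\, d\tau = 2$, which in the Gauss case was obtained by factoring $q^2 - 1 = r(\tau) P_N(\tau)$ with $r \in \C{P}_{N-2}$ and invoking orthogonality of $P_N$ to $\C{P}_{N-1}$. For the Radau nodes $\tau_1 < \ldots < \tau_N = 1$ I would instead appeal directly to the exactness of $N$-point Radau quadrature for $\C{P}_{2N-2}$. Since $q^2 \in \C{P}_{2N-2}$ and $q^2(\tau_i) = 1$,
\[
\int_{-1}^1 q^2(\tau)\, d\tau = \sum_{i=1}^N \omega_i q^2(\tau_i) = \sum_{i=1}^N \omega_i = 2,
\]
where the weights sum to $2$ because Radau quadrature integrates the constant $1 \in \C{P}_0$ exactly. (If one prefers the factorization style of the original proof, one may equivalently write $q^2 - 1 = r(\tau)\psi_N(\tau)$ with $\psi_N$ the degree-$N$ Radau node polynomial, which is orthogonal to $\C{P}_{N-2}$ on $[-1,1]$; the same identity results.)

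The remainder of the argument is unchanged. The Schwarz inequality applied to $|q|\cdot 1$ gives
\[
\int_{-1}^1 |q(\tau)|\, d\tau \le \left(\int_{-1}^1 d\tau\right)^{1/2}\left(\int_{-1}^1 q^2(\tau)\, d\tau\right)^{1/2} = 2,
\]
and combining this with the sign-choice identity $\sum_{i=1}^N |\int_{-1}^t l_i(\tau)\, d\tau| = \int_{-1}^t q(\tau)\, d\tau$ and the earlier bound on $|p(t)|$ yields $|p(\tau)| \le 2$ for all $\tau \in [-1,1]$. There is no genuine obstacle: the Gauss structure in Proposition~\ref{P1bound} enters only through the value of $\int q^2$, and Radau exactness on $\C{P}_{2N-2}$ supplies exactly the same value of $2$. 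The only slightly delicate point worth spelling out in the write-up is the verification that $q^2 \in \C{P}_{2N-2}$ (immediate from $q \in \C{P}_{N-1}$) so that Radau exactness is in fact applicable.
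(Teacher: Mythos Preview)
Your proposal is correct and follows essentially the same route as the paper: both arguments track Proposition~\ref{P1bound} verbatim and diverge only at the evaluation of $\int_{-1}^1 q^2(\tau)\,d\tau$. The paper uses the factorization $q^2(\tau)-1 = r(\tau)P_{N-1}^{(1,0)}(\tau)(\tau_N-\tau)$ with $r\in\C{P}_{N-2}$ and invokes orthogonality of the Jacobi polynomial $P_{N-1}^{(1,0)}$ with respect to the weight $1-\tau$, whereas you invoke $N$-point Radau exactness on $\C{P}_{2N-2}$ directly; these are equivalent (the orthogonality is precisely what underlies the exactness), and your parenthetical alternative is in fact the paper's own argument.
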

\smallskip

\begin{proof}
Recall that the interior Radau abscissa $\tau_i$, $1 \le i \le N-1$,
are the roots of the Jacobi polynomial $P_{N-1}^{(1,0)}$ associated
with the weight function $1 - \tau$.
%$P_{N-1}^{(1,0)}$ is orthogonal to all polynomials in $\C{P}_{N-2}$
%with respect to the weight function $1 - \tau$.
The proof of the corollary is identical to the proof of
Proposition~\ref{P1bound} until equation (\ref{q2}), which is replaced by
\begin{equation}\label{q3}
q^2(\tau) - 1 =
r(\tau) P_{N-1}^{(1.0)}(\tau)(\tau_N - \tau),
\end{equation}
where $r \in \C{P}_{N-2}$.
Since $P_{N-1}^{(1,0)}$ is orthogonal to all polynomials in $\C{P}_{N-2}$
with respect to the weight function $1 - \tau = \tau_N - \tau$,
the integral of (\ref{q3}) again yields the identity
\[
\int_{-1}^1 q^2(\tau) \; d\tau = 2.
\]
The remainder of the proof is identical to that of Proposition~\ref{P1bound}.
\end{proof}
\smallskip

\begin{remark}
The polynomial $p (\tau) = 1 + \tau$ satisfies the conditions of
Proposition~$\ref{P1bound}$ and Corollary~$\ref{radau}$, and $p(1) = 2$.
Hence, the upper bound $2$ is tight.
\end{remark}
\smallskip

\begin{remark}
For the Radau abscissa with $\tau_1 = -1$ and $\tau_N < 1$,
the condition $p(-1) = 0$ in the statement of Corollary~$\ref{radau}$ should
be replaced by $p(1) = 0$.
\end{remark}

%=======================================
\section{Appendix 2:
$\C{L}^2$ approximation with a singular weight by Yvon Maday}
\label{appendix2}
%=======================================
In (\ref{quad}) we integrate the error $u - \pi_N u$
in best $\C{H}^1(\Omega)$ approximation
using a singular weight $1/(1-\tau^2)$.
Here we relate this singular integral of the error to the error in the
$\C{H}_0^1(\Omega)$ norm.
\smallskip
\begin{proposition}\label{singular}
If $u \in \C{H}_0^1(\Omega)$, then
\begin{equation}\label{Z}
\|u - \pi_N u\|_0 \le N^{-1} |u - \pi_N u|_{\C{H}^1(\Omega)} , \quad
\mbox{where }
\|v\|_0 = 
\left( \int_\Omega \frac{v^2(\tau)}{1-\tau^2} \; d\tau\right)^{1/2} ,
\end{equation}
and $\pi_N$ is the projection into $\C{P}_N^0$ relative to the
the norm $| \cdot |_{\C{H}^1(\Omega)}$.
\end{proposition}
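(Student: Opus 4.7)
My plan is to simultaneously diagonalize the seminorm inner product $a(u,v) := \int_\Omega u'v'\, d\tau$ and the singular inner product $b(u,v) := \int_\Omega uv/(1-\tau^2)\, d\tau$ on $\C{H}_0^1(\Omega)$ by exhibiting a common eigenbasis in $\C{P}_N^0$.  I propose the antiderivatives of the Legendre polynomials,
\[
\phi_k(\tau) := \int_{-1}^\tau P_k(s)\, ds, \quad k \ge 1.
\]
First I would check that $\phi_k(1) = 0$ (from orthogonality of $P_k$ with $P_0 \equiv 1$ for $k \ge 1$), that $\phi_k$ has degree $k+1$ and thus $\phi_k \in \C{P}_{k+1}^0$, and that $\{\phi_k\}_{k=1}^{N-1}$ is therefore a basis of $\C{P}_N^0$ (the right count since $\dim\C{P}_N^0 = N-1$).

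The central algebraic step is to establish the Sturm--Liouville identity
\[
(1-\tau^2)\,\phi_k''(\tau) + k(k+1)\,\phi_k(\tau) = 0.
\]
This follows by integrating Legendre's equation $((1-\tau^2) P_k')' = -k(k+1) P_k$ from $-1$ to $\tau$ (the boundary term vanishes because $1 - \tau^2$ kills it at $\tau=-1$) and using $\phi_k'' = P_k'$.  Orthogonality in $a$ is then immediate from Legendre orthogonality: $a(\phi_k, \phi_j) = \int_\Omega P_k P_j\, d\tau = \frac{2}{2k+1}\delta_{kj}$.  Orthogonality in $b$ follows by integration by parts; the boundary terms vanish because $\phi_j(\pm 1) = 0$, and the Sturm--Liouville identity yields
\[
a(\phi_k, \phi_j) = -\int_\Omega \phi_k''\, \phi_j\, d\tau = k(k+1)\, b(\phi_k, \phi_j),
\]
so $\{\phi_k\}$ is simultaneously orthogonal with eigenvalues $\mu_k := k(k+1)$ relating the two inner products.

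To conclude, I would expand $u = \sum_{k \ge 1} c_k \phi_k$ in $\C{H}_0^1(\Omega)$, justified by density of $\bigcup_M \C{P}_M^0$ in $\C{H}_0^1(\Omega)$.  Because $\pi_N u$ is the $a$-orthogonal projection onto $\C{P}_N^0 = \mathrm{span}\{\phi_1,\ldots,\phi_{N-1}\}$ and the $\phi_k$ are $a$-orthogonal, the projection truncates the expansion, giving $e := u - \pi_N u = \sum_{k \ge N} c_k \phi_k$.  The simultaneous orthogonality then produces
\[
\|e\|_0^2 = \sum_{k \ge N} \frac{c_k^2\, a(\phi_k,\phi_k)}{\mu_k} \le \frac{1}{N(N+1)}\, |e|_{\C{H}^1(\Omega)}^2,
\]
and since $N(N+1) \ge N^2$, taking square roots yields (\ref{Z}).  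The main obstacle I anticipate is the careful handling of the Sturm--Liouville identity, where the equivalent statement $\phi_k'' = -k(k+1)\phi_k/(1-\tau^2)$ involves apparent singularities at $\tau = \pm 1$ that are removable only because $\phi_k$ vanishes there; a secondary technical point is justifying the spectral expansion for general $u \in \C{H}_0^1(\Omega)$ by approximating with polynomials in $\bigcup_M \C{P}_M^0$ and passing to the limit using continuity of both sides of (\ref{Z}) under $\C{H}^1$-convergence.
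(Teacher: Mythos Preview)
Your proposal is correct and is essentially the paper's own argument: your basis $\phi_k(\tau)=\int_{-1}^\tau P_k$ coincides (up to the scalar factor $-1/[k(k+1)]$) with the paper's basis $\psi_k(\tau)=(1-\tau^2)P_k'(\tau)$, as the integrated Legendre equation that you invoke shows directly. Both proofs then use simultaneous orthogonality of this common basis in the two inner products, with the same eigenvalue ratio $k(k+1)$, to compare the tail sums for $k\ge N$.
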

\smallskip
\begin{proof}
Let $\langle \cdot, \cdot \rangle_1$ denote the standard
$\C{H}_0^1(\Omega)$ inner product defined by
\[
\langle u , v \rangle_1 = \int_{\Omega} u'(\tau) v'(\tau) \; d\tau.
\]
By the Legendre equation,
the polynomials $\psi_k (\tau) := (1-\tau^2)P_k'(\tau)$ are orthogonal
with respect to the $\C{H}_0^1(\Omega)$ inner product and
\[
\langle \psi_k , \psi_k \rangle_1 =
\langle (1-\tau^2)P_k', (1-\tau^2)P_k' \rangle_1 =
k^2(k+1)^2 \langle P_k, P_k \rangle_{\C{L}^2(\Omega)} =
\frac{ 2k^2(k+1)^2}{2k+1} .
\]
Consequently, $\{\psi_k : 1 \le k \le N-1\}$, is an orthogonal basis
for $\C{P}_N^0$, and the orthogonal
projection of $u$ into $\C{P}_N^0$ is given by
\[
\pi_N u = \sum_{i=1}^{N-1} u_k \psi_k, \quad
u_k = \frac{\langle u, \psi_k \rangle_1}{\langle \psi_k , \psi_k \rangle_1}.
\]

Let $\langle \cdot , \cdot \rangle_0$ denote the inner product on
$\C{H}_0^1(\Omega)$ defined by
\[
\langle u , v \rangle_0 = \int_{\Omega} \frac{u(\tau) v(\tau)}{1-\tau^2}
\; d\tau.
\]
By the Schwarz and Hardy inequalities,
$\|u \|_0^2 \le 2 |u|_{\C{H}^1(\Omega)} \|u\|_{\C{L}^2(\Omega)}$.
By the Legendre equation, the $\psi_k$ are also
orthogonal in the $\langle \cdot , \cdot \rangle_0$
inner product and
\begin{eqnarray*}
\langle \psi_k , \psi_k \rangle_0 &=&
\langle (1-\tau^2)P_k', (1-\tau^2)P_k' \rangle_0 =
\langle (1-\tau^2)P_k', P_k' \rangle_{\C{L}^2(\Omega)} \\
&=& k(k+1) \langle P_k, P_k \rangle_{\C{L}^2(\Omega)} = \frac{ 2k(k+1)}{2k+1} .
\end{eqnarray*}
Due to orthogonality, we have
\begin{eqnarray*}
\|u - \pi_N u\|_0^2 &=& \sum_{k\ge N} u_k^2 \langle \psi_k, \psi_k \rangle_0 =
\sum_{k\ge N} \left( \frac{2k(k+1)}{2k+1}\right) u_k^2 , \\
|u - \pi_N u|_{\C{H}^1(\Omega)}^2 &=&
\sum_{k\ge N} u_k^2 \langle \psi_k, \psi_k \rangle_1 =
\sum_{k\ge N} \left( \frac{2k^2(k+1)^2}{2k+1}\right) u_k^2 .
\end{eqnarray*}
Comparing these norms, we see that (\ref{Z}) holds.
%The expansion coefficients for $u$ relative to the $\psi_k$ basis and the
%$\langle \cdot, \cdot \rangle_0$ inner product are identical to the
%coefficients relative the $\langle \cdot , \cdot \rangle$ inner product
%by the uniqueness of the expansion.
\end{proof}

{\bf Acknowledgments.}
We thank the reviewers for their careful reading of the manuscript
and their constructive comments and suggestions.
In particular, one reviewer suggested a better arrangement for
the proof of Lemma~\ref{interp}.
The authors deeply appreciate Yvon Maday's contribution of
Proposition~\ref{singular}, a key step in Lemma~\ref{interp}.

%============================================================
\bibliographystyle{siam}

\end{document}